\theoremstyle{plain}
\newtheorem{thm}{Theorem}
\newtheorem*{starthm}{Main Theorem}
\newtheorem{prop}[thm]{Proposition}
\newtheorem{lemma}[thm]{Lemma}
\newtheorem{defn}{Definition}
\newtheorem{conjecture}{Conjecture}
\newtheorem{remark}{Remark}[section]
\newcommand{\cala}{{\mathcal A}}
\newcommand{\calf}{{\mathcal F}}
\newcommand{\calk}{{\mathcal K}}
\newcommand{\call}{{\mathcal L}}
\newcommand{\cals}{{\mathcal S}}
\newcommand{\calt}{{\mathcal T}}
\newcommand{\calu}{{\mathcal U}}
\newcommand{\CC}{{\mathbb C}}
\newcommand{\NN}{{\mathbb N}}
\newcommand{\ZZ}{{\mathbb Z}}
\renewcommand{\hat}{\widehat}
\newcommand{\la}{\lambda}
\begin{document}

\title[Ergodicity]{Ergodicity in some families of Nevanlinna Functions}

\author{Tao Chen, Yunping Jiang and Linda Keen}

\address{Tao Chen, Department of Mathematics, Engineering and Computer Science,
Laguardia Community College, CUNY,
31-10 Thomson Ave. Long Island City, NY 11101 and
CUNY Graduate Center, New York, NY 10016
}
\email{tchen@lagcc.cuny.edu}

\address{Yunping Jiang, Department of Mathematics, Queens College of CUNY,
Flushing, NY 11367 and Department of Mathematics and the CUNY Graduate Center, New York, NY 10016}
\email{yunping.jiang@qc.cuny.edu}

\address{Linda Keen, Department of Mathematics, the CUNY Graduate
School, New York, NY 10016}
\email{LKeen@gc.cuny.edu; linda.keenbrezin@gmail.com}

\thanks{This research is partially supported by gifts from the Simons Foundation (\#523341 and \#942077) and PSC-CUNY awards.
%and Yuan-Ling Ye's  NSFC Grant No.  12271185.
It was also supported by the National Science Foundation under Grant No. 1440140, while the third author was in residence at the Mathematical Sciences Research Institute in Berkeley, California, during the spring semester 2022}

\subjclass[2010]{Primary: 37F10, 30F05; Secondary: 30D05, 37A30}

\begin{abstract}
We study {\em Nevanlinna functions} $f$ that are transcendental meromorphic functions having $N$ asymptotic values and no critical values. In \cite{KK} it was proved that if the orbits of all the asymptotic values have accumulation sets that are compact and on which $f$ is a repeller, then $f$ acts ergodically on its Julia set.  In this paper, we prove that if some, but not all of the asymptotic values have this property, while the others are prepoles, the same holds true.   This is the first paper to consider this mixed case.

%In a sequel we will  prove that if all the asymptotic values are prepoles, the opposite is true -- that is, $f$ does not act ergodically on its Julia set.
\end{abstract}

\maketitle

\section{Introduction}

An early result of McMullen \cite{Mc} says that if $f$  is a rational map of degree greater than one, and if $P(f)$ is its post-singular set, one of two things holds: either $f$'s Julia set is equal to the whole Riemann sphere and the action of $f$ is ergodic or, it is not the whole sphere and the spherical distance $d(f^n(z),P(f)) \rightarrow 0$ for almost every $z$ in $J(f)$ as $n \to \infty$, that is, the $\omega$-limit set $\omega(z)$ is a subset of $P(f)$ that varies with $z$.   Bock~\cite{Bock2} proved a similar result holds for meromorphic functions.
% that either the spherical distance $d(f^n(z),P(f)\cup\{\infty\}) \rightarrow 0$ for almost every $z$ in $J(f)$ as $n \to \infty$ or $f$'s Julia set is equal to the whole Riemann sphere and the action of $f$ is ergodic and recurrent on its Julia set.
%{\color{red} It is difficult to decide which of these conditions holds for a given  meromorphic function. (I am not sure the conditions means here.) How about change this sentence. Note that if the Julia set is not the whole, the second case applies} 
This  begs the question: what are the conditions on a meromorphic function such that its Julia set is the whole sphere and the meromorphic function acts on the sphere is ergodic or not?   In the realm of entire functions,   Lyubich \cite{Lyu} proved  the exponential function $e^z$, whose Julia set is the sphere, is not ergodic   and Bock  \cite{Bock1}  proved that if the set of singular values of an entire function is  finite, and all of these are pre-periodic but not periodic, then the map is ergodic.
  In the realm of meromorphic functions, Bock, \cite{Bock2}, (see also \cite[Theorem 3.3]{RVS} for a proof of Bock's theorem),  proved that if the ``radial Julia set'', a subset of the Julia set,  has positive measure, the action is ergodic. Other earlier results dealt with the particularly simple example of meromorphic functions with two asymptotic values and no critical values.
% In this case, if the post singular set is finite and bounded, the standard classification theory shows that  the Julia set is the whole sphere.
 There are partial results on the ergodicity question for this family: Let  $\lambda, \mu \in \mathbb{C}$, and

$$f=\frac{\lambda e^z-\mu e^{-\mu}}{e^z-e^{-z}},$$
where $\lambda, \mu$ are $f$'s two asymptotic values.  Keen and Kotus \cite{KK} have shown that if the accumulation sets of both  $\lambda$ and $\mu$ are compact, and $f$ is a repeller on this set,  then the Julia set is $\widehat{\mathbb{C}}$ and $f$ is ergodic.  By way of contrast, Skorulski \cite{S1,S2} has shown that if there exist natural numbers $p$ and $q$ such that $f^p(\lambda)=f^q(\mu)=\infty$, then the Julia set is $\widehat{\mathbb{C}}$ and $f$ is non-ergodic. (See also \cite{CJK}.)

Weiyuan Qiu asked one of the authors what happens in the remaining case where one asymptotic value lands on a repelling cycle, and the other is a prepole.   In answering his question, we were able to prove a more general result for the full family of functions with finitely many asymptotic values and no critical values, so-called "Nevanlinna functions".  Our main theorem is

\medskip
\begin{starthm}~\label{main}
If $f$ is a Nevanlinna function with $N$ asymptotic values of which $0<K<N$ are prepoles, and if the $\omega$-limit sets of the remaining $N-K$ are compact repellers, then the Julia set is $\widehat{\mathbb{C}}$ and $f$ is ergodic.
\end{starthm}

\begin{remark} Our proof of this theorem implies that for these Nevanlinna functions, the measure of the radial Julia set is positive.
\end{remark}

The case $K=0$ was analyzed in~\cite{KK}.  For the case $K=N$,  we have the following conjecture which we are still working on and will report on in a future paper.

\medskip
\begin{conjecture}~\label{nec}
When $K=N$, the action of $f$ on its Julia  set $\widehat{\mathbb{C}}$ is not ergodic.
\end{conjecture}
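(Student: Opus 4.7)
The plan is to prove non-ergodicity by extending Skorulski's two-asymptotic-value argument~\cite{S1,S2} to the general $N$-prepole setting, constructing an explicit nontrivial $f$-invariant Borel partition of $J(f) = \widehat{\CC}$ in Lebesgue spherical measure $m$.

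\emph{Step 1 (Setup).} Since every asymptotic value $a_i$ is a prepole with $f^{k_i}(a_i) = \infty$, the post-singular set $P(f) = \bigcup_i \{a_i, f(a_i), \ldots, f^{k_i - 1}(a_i)\}$ is a finite subset of $\CC$. The standard classification of Fatou components for meromorphic functions, together with the absence of wandering and Baker domains for Nevanlinna functions with finite post-singular set, rules out every kind of Fatou component, so $F(f) = \emptyset$ and $J(f) = \widehat{\CC}$. Bock's dichotomy then supplies two possibilities: either $f$ is ergodic, or $\omega(z) \subseteq P(f)$ for almost every $z$. To rule out the first, the plan is to produce directly an explicit nontrivial $f$-invariant measurable set.

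\emph{Step 2 (Tract symbolic dynamics).} Near $\infty$, $f$ has $N$ disjoint logarithmic asymptotic tracts $T_1, \ldots, T_N$, each mapped by $f$ as a universal cover onto a punctured neighborhood of the corresponding asymptotic value $a_i$. Because $P(f)$ contains the poles $p_i = f^{k_i - 1}(a_i)$, almost every orbit enters arbitrarily small neighborhoods of these poles infinitely often, and hence crosses into the asymptotic tracts infinitely often. For such generic $z$, record the sequence $\tau(z) = (s_1(z), s_2(z), \ldots) \in \{1, \ldots, N\}^{\NN}$ labeling which tract $T_{s_n}$ contains the $n$-th near-$\infty$ visit of the orbit. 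The resulting Borel map $\tau$ intertwines an induced first-return version of $f$ with the shift on $N$ symbols.

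\emph{Step 3 (Nontrivial decomposition).} To rule out ergodicity, I would show that the push-forward $\tau_* m$ admits a nontrivial shift-invariant decomposition, or equivalently that some cylinder set in $\{1, \ldots, N\}^{\NN}$ pulls back under $\tau$ to an $f$-invariant Borel subset of $J(f)$ with intermediate Lebesgue measure. The mechanism is that the distinct post-singular orbits $\{a_i, \ldots, p_i\}$ have different lengths $k_i$ and different geometric profiles, producing distinct Jacobian factors under the univalent inverse branches through each tract. Combined with Koebe-type distortion estimates of the kind used in the Theorem A proof and a Borel--Cantelli argument along orbits, this asymmetry should yield invariant sets $E_i$ defined by the asymptotic frequency of the tract symbol $i$ in $\tau(z)$, at least two of which have positive Lebesgue measure.

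The principal obstacle is Step 3. Because the shift on $N$ symbols admits many shift-ergodic invariant measures, one must rule out the possibility that $\tau_* m$ is itself ergodic. Skorulski's $N = 2$ argument exploits the precise analytic form of the two-tract function; a unified treatment for general $N$ will likely require the universal logarithmic model for each tract together with comparative transfer-operator estimates across the distinct post-singular orbits. Carrying out this combinatorial-geometric analysis, and thereby proving the conjecture, is the focus of our planned forthcoming paper.
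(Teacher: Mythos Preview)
The statement you are attempting to prove is labeled a \emph{Conjecture} in the paper, and the authors explicitly say they are ``still working on'' it and will ``report on it in a future paper.'' There is no proof in the paper to compare against; the only relevant remark the paper makes is the parenthetical observation (just after the start of the proof of Theorem~\ref{main1}) that when $K=N$ the asymptotic tracts fill up $\sigma_l^{-1}(\cala_R)$, which is what breaks the density-point argument for $m(I_l)=0$ and is cited as supporting evidence for the conjecture via~\cite{CJK}.

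Your proposal is, by your own admission, not a proof: Step~3 is where all the content lies, and you explicitly defer it. Beyond incompleteness, there is a structural concern with the mechanism you propose. You plan to separate invariant sets by the asymptotic frequency of the tract symbol $i$ in $\tau(z)$, relying on ``different lengths $k_i$ and different geometric profiles'' of the post-singular orbits. But nothing in the hypotheses forces such asymmetry: a Nevanlinna function can be highly symmetric (for instance, conjugate to itself under a rotation by $2\pi/N$), so that all $k_i$ coincide and the tracts are permuted isometrically. In that situation any frequency-based invariant set is either null or conull by symmetry, and your proposed decomposition collapses. Skorulski's $N=2$ argument and the approach in~\cite{CJK} do not proceed by comparing tract frequencies; they instead show directly that the set of points escaping to infinity along the prepole orbits has positive (indeed full) measure, and then exhibit a nontrivial invariant partition inside that escaping set using the fine geometry of how orbits shadow the prepole itineraries. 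If you want to extend that to general $N$, the right object to analyze is the escaping set $I(f)$ together with the induced map $\sigma$ on the union of tracts, not asymptotic symbol frequencies.
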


\medskip
The proof of our theorem depends on generalizations of some lemmas in \cite{KK}.     After an introductory section in which we give the basic definitions and properties of Nevanlinna functions, we state and prove these lemmas and apply them to the proof of the theorem.

\medskip
\medskip
\noindent {\bf Acknowledgement:} We would like to thank Professor Janina Kotus for her helpful comments and suggestions and  for pointing out Skorulski's papers~\cite{S1,S2} to us.
\medskip
\medskip

\section{Preliminaries}\label{prelim}																																		In this section, we recall some of the basic theory of transcendental meromorphic functions that we will need.    Such a function, $f: \CC \rightarrow \hat{\CC}$ is holomorphic except at the set of poles, $\{ f^{-1}(\infty) \},$ and is a local homeomorphism everywhere except at the set $S_f$ of singular points.   In this paper, we will be interested in those functions for which $\#S_f$ is finite and will assume this throughout.  For such functions, the singular values  are of two types:  \\
Let $v$ be a singular value and let $V$ be a neighborhood of $v$.  Then
\begin{itemize}
\item If, for some component $U$ of $f^{-1}(V)$,  there is a  $u\in U$ such that $f'(u)=0$, then $u$ is   a {\em critical point} and $v=f(u)\in V$ is the corresponding  {\em critical value}, or
\item	 If, for some component $U$ of $f^{-1}(V)$, $f:U \rightarrow V \setminus \{ v \}$ is a universal covering map then $v$ is a {\em logarithmic asymptotic value}.   The component $U$ is called an {\em asymptotic tract} for $v$.   Any path $\gamma(t) \in U$ such that $\lim_{t \to 1} \gamma(t) = \infty$, $\lim_{t \to 1} f(\gamma(t))=v$ is called an {\em asymptotic path} for $v$.
\end{itemize}

At regular or non-singular points, meromorphic functions are local homeomorphisms. The dynamics of meromorphic functions with finitely many singular values have been the focus of many dynamical studies.  In particular, all their asymptotic values are isolated and hence logarithmic.   We, therefore, drop the descriptor logarithmic below and call them asymptotic values.
 
An important tool in studying meromorphic functions with finitely many critical points and finitely many asymptotic values is that they can be characterized by their Schwarzian derivatives.

\begin{defn} If $f(z)$ is a meromorphic function, its {\em Schwarzian derivative} is
$$
S(f) = (\frac{f''}{f'})' - \frac{1}{2}(\frac{f''}{f'})^2.
$$
\end{defn}
The Schwarzian differential operator satisfies the  chain rule condition

$$ S(f\circ g) =S(f) g'^2 +S(g)$$
from which it is easy to deduce that if $f$ is a M\"obius transformation, $S(f)=0$, so that
$f\circ g$ and $g$ have the same Schwarzian derivative.

In \cite{N}, Chap. XI, \S 3, Nevanlinna,  using a technique he calls rational approximation, shows how to, given a finite set of points in the plane and finite or infinite branching data for these points, construct a meromorphic function whose topological covering properties are determined by this data.   The function is defined up to M\"obius transformations.
 He proves

 \medskip
\begin{thm}  The Schwarzian derivative of a meromorphic function with finitely many critical points and finitely many asymptotic values is a rational function.  If there are no critical points, it is a polynomial.  Conversely, if a meromorphic function has a rational Schwarzian derivative, it has finitely many critical points and finitely many asymptotic values. If the Schwarzian derivative is a polynomial of degree $m$, then the meromorphic function has $m+2$ asymptotic values and no critical points.
\end{thm}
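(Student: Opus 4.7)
The plan is to analyze the singularities of $S(f)$ separately in $\CC$ and at $\infty$, and then, for the converse, to reconstruct $f$ from its Schwarzian via the associated linear ODE $w''+\tfrac{1}{2}\,S(f)\,w=0$.

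For the local picture in $\CC$, I would note that $S(f)$ is holomorphic wherever $f$ is a local biholomorphism to its image in $\hat\CC$. A direct Laurent-series computation shows that if $f$ has a critical point of multiplicity $k\ge 2$ at $z_0$, then $S(f)$ has a double pole at $z_0$ with leading coefficient $(1-k^2)/2$. Near a simple pole of $f$, the M\"obius-invariance $S(f)=S(1/f)$, together with the fact that $1/f$ is holomorphic and nonvanishing there, forces $S(f)$ to be holomorphic. Near a multiple pole, the same invariance reduces to the critical-point case for $1/f$ and again yields a double pole of $S(f)$. Hence, if $f$ has finitely many critical points and finitely many multiple poles, $S(f)$ is meromorphic on $\CC$ with only finitely many (at most double) poles; if moreover $f$ has no critical points and only simple poles, $S(f)$ is entire on $\CC$.

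For the behavior at $\infty$, I would use the asymptotic-tract structure. For each asymptotic value $v_j$ there is a tract $U_j$ and a neighborhood $V_j$ of $v_j$ on which $f:U_j\to V_j\setminus\{v_j\}$ is a universal cover. Lifting via the exponential gives a conformal map $\psi_j:\HH\to U_j$ with $f\circ\psi_j(\zeta)=v_j+\epsilon e^{-\zeta}$. Since $S(v_j+\epsilon e^{-\zeta})=-\tfrac{1}{2}$, the Schwarzian chain rule produces the identity
\[
(S(f)\circ\psi_j)\,(\psi_j')^2 + S(\psi_j) = -\tfrac{1}{2}
\]
on $\HH$, expressing $S(f)$ on $U_j$ explicitly in terms of the conformal map $\psi_j$. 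Standard asymptotic estimates on $\psi_j$ together with the fact that there are only finitely many tracts imply that $S(f)$ has at worst polynomial growth near $\infty$ in $\CC$. A meromorphic function on $\CC$ with finitely many poles and polynomial growth at $\infty$ is rational, giving the first assertion; and when $f$ has no critical points and only simple poles, the second paragraph shows $S(f)$ is entire, hence a polynomial.

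For the converse and the degree count, I would pass to the linear ODE $w''+\tfrac{1}{2}R(z)\,w=0$ with $R$ the prescribed rational function. A classical computation shows that the ratio $f=w_1/w_2$ of any two linearly independent solutions is meromorphic on $\CC$ with $S(f)=R$, determined up to post-composition with a M\"obius transformation; the poles of $R$ in $\CC$ produce the (finitely many) critical points and multiple poles of $f$. The count $m+2$ in the polynomial case is obtained from Stokes-sector analysis at $\infty$: WKB asymptotics partition a punctured neighborhood of $\infty$ into exactly $m+2$ sectors in which distinguished solutions grow or decay exponentially, and each sector furnishes an asymptotic tract of $f$ carrying its own distinct asymptotic value. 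I expect the hardest step to be precisely this Stokes-sector count, as it is what converts the algebraic degree of the Schwarzian into the topological count of asymptotic tracts; by contrast the interior Laurent computations and the per-tract chain-rule identity are routine bookkeeping.
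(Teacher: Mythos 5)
The paper does not actually prove this theorem; it attributes it to Nevanlinna \cite{N}, and the text that follows the statement develops only one part of the converse — the count of $N=m+2$ asymptotic tracts for a polynomial Schwarzian — via the linear equation $w''+Pw=0$, the Liouville transform, and the WKB/sector analysis between critical rays. Your converse sketch follows essentially that same route, and your local Laurent analysis in $\CC$ (the $(1-k^2)/2$ leading coefficient of the double pole at a critical point of local degree $k$, M\"obius invariance disposing of simple poles, multiple poles reduced to the $1/f$ case) is correct and standard. One small thing worth making explicit in the converse: the absence of critical points and of multiple poles for $f=w_1/w_2$ is immediate from $f'=-W/w_2^{\,2}$ with $W$ the (constant, nonzero) Wronskian.

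The genuine gap is in the forward direction, precisely at ``Standard asymptotic estimates on $\psi_j$ together with the fact that there are only finitely many tracts imply that $S(f)$ has at worst polynomial growth near $\infty$.'' This is where the real content of the theorem lives, and it is not routine bookkeeping. Rearranging your chain-rule identity gives $S(f)=\bigl(-\tfrac12 - S(\psi_j)\bigr)(\psi_j')^{-2}\circ\psi_j^{-1}$ on the tract $U_j$, so polynomial growth of $S(f)$ requires quantitative lower bounds on $|\psi_j'|$ and upper bounds on $|S(\psi_j)|$. An asymptotic tract is a priori just a simply connected unbounded domain; nothing in the hypotheses directly controls the geometry of $\partial U_j$, and near the tract boundaries (the critical rays) the map $\psi_j$ degenerates, so the needed estimates are exactly the assertion that the tracts are approximately sectorial with bounded distortion. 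Establishing that is the hard step of Nevanlinna's argument (carried out through the line-complex/covering-surface theory and distortion results of Ahlfors--Teichm\"uller type), and you have not indicated how to supply it, nor how to pass from estimates inside each $U_j$ to a global Phragm\'en--Lindel\"of conclusion across the critical rays separating the tracts. As it stands, you have reduced the theorem to the claim that needs proving rather than proved it; you mark the Stokes-sector count as the hardest step, but it is this growth estimate that is the bottleneck.
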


In the literature, meromorphic functions with polynomial Schwarzian are often called {\em Nevanlinna functions}  (See e.g. \cite{C,EM}).  These are the focus of this paper.

To prove our results, we will need estimates on the asymptotic behavior of the poles and residues of Nevanlinna functions, summarized in Proposition \ref{derivatzeros} at the end of this section.  These are well known, and there is extensive literature; see, e.g., \cite{H}, Chap. 5 or \cite{L}, Chap. 4, \cite{C}  for details.

We begin by recalling the connection between Nevanlinna functions and
 the second-order differential equation,
\begin{equation}\label{ode}
w'' + P(z) w = 0,
\end{equation}
where $P(z)$ is a polynomial of degree $m$.  The solutions of (\ref{ode}) are holomorphic and form a two-dimensional linear space.    A straightforward calculation shows that if $w_1,w_2$ are linearly independent solutions of (\ref{ode}), then $f=w_1/w_2$ is meromorphic and $S(f)=2P(z)$.

 To develop some intuition for discussing Nevanlinna functions, let us begin with a ``toy" example.
Let $P(z)=1$  so that  equation~(\ref{ode}) becomes
\begin{equation}\label{toy}w''+w=0.
\end{equation}
It is easy to check that its solutions are in the two-dimensional space generated by the ``principal"  solutions $w_1=e^{iz}$ and $w_2=e^{-iz}$ and the quotient  $f$ of these two, or of any pair of linearly independent solutions of equation (\ref{toy}) satisfies $S(f)=2$.

Define the ray  $\rho_0 (t) =\{z=t >0\}$ and for any $R>0$ and $\epsilon\in (0,\pi)$, define the sector $$\cals(R, \epsilon)=\mathcal{S}=\{z:\  |z| >R, |\arg z - \pi |  > \epsilon\}.$$
 Obviously,  $\mathcal{S}\setminus \rho_{0}(t)$ consists of two components, $\calu^+$ containing an infinite segment of the positive imaginary axis
  and $\calu^-$ containing an infinite segment of the negative imaginary axis.
  Note that $w_1$ maps $\calu^+$ to a punctured neighborhood of zero and maps $\calu^-$ to a punctured neighborhood of infinity while $w_2$ interchanges the images.
  Thus, $0$ and $\infty$ are asymptotic values of $w_i$, and these two components are their respective asymptotic tracts.  Note that along the ray  $\rho_0 (t)$ separating these two asymptotic tracts, the function $w_1/w_2$  assumes every value in the unit circle infinitely often.

 Similarly,  for any $\delta >0$ and large $T$, there is an $r$ such that in the neighborhood $\{ z \;|\; |z| > T,  \,  |\arg z |< \delta \}$,
$f=w_1/w_2$ takes every value in $\CC^{*}$ infinitely often. The ray $\rho_{0}(t)$ is called the {\em critical ray} and the argument $\theta_0=0$ of $\rho_{0}(t)$  is called
   the {\em critical direction}.

    For a general Nevanlinna function, we define its {\em critical rays} and {\em critical directions}  as follows:

\medskip
 \begin{defn}  Let $f$ be a Nevanlinna function with Schwarzian derivative $2P(z)$   of degree $m \geq 0$  and  suppose the leading coefficient  of $P(z)$ is $a$.   Set $N=m+2$.  Then, each of the solutions $\theta_k$, $k\in \{0, 1, \cdots, N-1\}$ of the solutions of the congruence
$$
\arg a + N \theta \equiv 0 \mod 2\pi
$$
determines a direction and a ray $\rho_k(t)=\{ te^{\theta_{k} i}\;|\; t>T\}$, at infinity.  Each solution determines a {\em critical direction} and a {\em critical ray} of $f$.
\footnote{ See e.g. \cite{C}, p. 6.  These are often also called   Julia directions and Julia rays of $f$.}
\end{defn}

 We will show that, as in the toy example, there is a sector containing the critical ray $\rho_{k}(t)=\{ te^{\theta_{k} i}\;|\; t>0\}$ for each $0\leq k<N$  on which every solution $f$ of the Schwarzian equation take on infinitely many values infinitely often.

 To do this,  for each $k$,  defined $\mod N$,  we make a change of variable that  essentially  turns the sector  
 $$
 S_{k}= \{\theta_{k-1} < \arg z <\theta_{k+1}\} 
 $$
 of the $z$-plane into a sector of a $Z(z)$ plane on which the transported function acts like the toy example. 
 More precisely, in the rest of this section,  assume  both $R_0 >0 $ and the solution $\theta_k$ are fixed and define
 $$
 Z(z)=\int_{R_0e^{i\theta_k}}^z P(s)^{\frac{1}{2}} ds, \quad z\in S_{k},
 $$
 where the branch of  the square roots is chosen so that after integration, $(az^N)^{1/2}$ is
 %$\sqrt{P(s)}= \frac{\sqrt{az^N}}{z} \sqrt{1+ o(1)}$ on the ray $\rho_{k} (t)$ for large  $|z|$,  where 
  real and positive on $\rho_k (t)$. 
   
\begin{lemma}\label{lan4.3.6} For some small $\epsilon_0>0$, the function $Z=Z(z)$ satisfies
$$Z(z) = \frac{2a^{\frac{1}{2}}}{N}z^{\frac{N}{2}}(1 + \rm{o}(1) )\, \text{ as }  z \to \infty, |\arg z-\theta_k|\leq \frac{2\pi}{N}-\epsilon.$$
 Moreover, for any $R>R_0$ and $\epsilon>\epsilon$, $Z(z)$ is univalent on the sector
 $$
 \cals=\{z: \ |z|>R, |\arg z-\theta_k|<2\pi/N-\epsilon\}$$ and $Z$ maps $\cals$ onto a region in the $Z$ plane containing  the sector
  $$\calt=\{Z:\  |Z|>R', |\arg Z -\pi | > \epsilon'\}$$
  where $R'$ is large and $\epsilon'>N\epsilon/2$.
 \end{lemma}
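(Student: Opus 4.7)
The plan is to treat $Z(z)$ as a perturbation of the explicit ``model'' power map $\frac{2a^{1/2}}{N} z^{N/2}$. Writing $P(s) = a s^{m}(1 + h(s))$ with $h(s) = O(|s|^{-1})$ as $s\to\infty$, I would fix the branch of $P(s)^{1/2}$ by requiring that $(a s^N)^{1/2}$ be real and positive on $\rho_k(t)$, as in the toy example. For $|s|$ sufficiently large and $|\arg s - \theta_k|\leq 2\pi/N - \epsilon$, the binomial expansion then gives $P(s)^{1/2} = a^{1/2} s^{m/2}(1 + O(|s|^{-1}))$ uniformly. Integrating along the contour that goes from $R_0 e^{i\theta_k}$ outward along $\rho_k$ to $|z|e^{i\theta_k}$ and then along the arc $|s|=|z|$ to $z$, the leading term contributes $\frac{2a^{1/2}}{N}\bigl(z^{N/2} - R_0^{N/2}e^{iN\theta_k/2}\bigr)$, while the remainder integrates to $O(|z|^{N/2-1})$. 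Since both the endpoint constant and the remainder are $o(|z|^{N/2})$, dividing out yields the claimed asymptotic $Z(z) = \frac{2a^{1/2}}{N}z^{N/2}(1 + o(1))$.

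For univalence on $\cals$, I would first note that $Z'(z) = P(z)^{1/2}$ is nowhere zero once $R$ exceeds the moduli of all zeros of $P$, so $Z$ is locally injective. The model map $\zeta\mapsto \zeta^{N/2}$ is globally injective on any sector of angular opening strictly less than $4\pi/N$, and $\cals$ has opening $4\pi/N - 2\epsilon$. If $Z(z_1)=Z(z_2)$ for distinct $z_1,z_2\in\cals$, the asymptotic forces $|z_1^{N/2}-z_2^{N/2}|$ to be small compared to $\max(|z_1|,|z_2|)^{N/2}$, contradicting injectivity of the power map on this sector once $R$ is large enough. For the image, the same asymptotic shows that the two boundary rays of $\cals$ map to curves asymptotic in the $Z$-plane to the rays $\arg Z = \pm(\pi - N\epsilon/2)$ (the branch choice sends $\rho_k$ to the positive real axis), while the inner arc $|z|=R$ has bounded image. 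Hence $Z(\cals)$ contains, outside a sufficiently large disk, any sector $|\arg Z|<\pi - N\epsilon/2 - \delta$; given $\epsilon'>N\epsilon/2$, choosing $\delta < \epsilon' - N\epsilon/2$ and then $R'$ large enough places $\calt$ inside $Z(\cals)$.

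The main obstacle is producing genuinely \emph{uniform} error control for the asymptotic expansion, so the perturbative comparison with the model map actually goes through. This is precisely what forces the angular gap $\epsilon$ at the boundary of the sector: near the neighbouring critical directions $\theta_{k\pm 1}$, the orientation of $(as^N)^{1/2}$ breaks down, the binomial expansion of $P(s)^{1/2}$ loses uniformity, and the model $\zeta\mapsto\zeta^{N/2}$ ceases to be injective. Once the uniform asymptotic is established on $|\arg s-\theta_k|\leq 2\pi/N-\epsilon$, both the univalence and the image-containment statements follow from routine perturbative arguments comparing $Z$ with $\frac{2a^{1/2}}{N}z^{N/2}$.
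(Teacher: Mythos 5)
Your overall strategy (integrate the asymptotic of $P(s)^{1/2}$ to get $Z(z)\sim\frac{2a^{1/2}}{N}z^{N/2}$, then compare $Z$ with the model power map) is the same as the paper's, and the asymptotic estimate via the radial-plus-arc contour is fine. The gap is in the univalence step. You argue that $Z(z_1)=Z(z_2)$ forces $|z_1^{N/2}-z_2^{N/2}|=o\bigl(\max(|z_1|,|z_2|)^{N/2}\bigr)$, "contradicting injectivity of the power map." But injectivity of $z\mapsto z^{N/2}$ on $\cals$ only says $z_1^{N/2}\neq z_2^{N/2}$; it gives no lower bound on $|z_1^{N/2}-z_2^{N/2}|$ relative to $\max(|z_i|)^{N/2}$, and that ratio tends to $0$ whenever $z_1,z_2$ get close, so there is no contradiction. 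Nor does $Z'=P^{1/2}\neq 0$ cover those cases: a nonvanishing derivative gives injectivity only on some unspecified neighbourhood of each point. The regime your argument leaves untreated is intermediate separation, e.g. $|z_1-z_2|\asymp|z_1|^{1/2}$: there $|z_1^{N/2}-z_2^{N/2}|\asymp|z_1|^{N/2-1/2}=o(|z_1|^{N/2})$, so the claimed contradiction evaporates, while unquantified local injectivity does not reach that far.

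To repair this you need quantitative versions of both ingredients and a case split: (i) $Z$ is univalent on every disk $D(z_0,c|z_0|)$ with $|z_0|$ large — for instance by Noshiro--Warschawski, since $\arg Z'(z)=\arg P(z)^{1/2}$ varies by only $O(c)$ on such a disk, so $Z'$ stays in a half-plane; and (ii) an explicit lower bound $|z_1^{N/2}-z_2^{N/2}|\geq c'\max(|z_i|)^{N/2}$ whenever $|z_1-z_2|\geq c\min(|z_i|)$ and both points lie in a sector of opening $<4\pi/N$. The dichotomy $|z_1-z_2|<c\min(|z_i|)$ versus $|z_1-z_2|\geq c\min(|z_i|)$ then gives global univalence. (A cleaner alternative is to compare derivatives, $Z'/\xi'=1+O(1/|z|)$ where $\xi=\frac{2a^{1/2}}{N}z^{N/2}$, and run a boundary/degree argument — closer to what Langley's Lemma 4.3.6 actually does.) Finally, note that your image-containment step — $Z(\cals)\supset\calt$ because the boundary rays of $\cals$ map to curves asymptotic to $\arg Z=\pm(\pi-N\epsilon/2)$ — tacitly uses the univalence (or at least properness) you have not yet established, so it inherits the same gap.
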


 \begin{proof}(See Lemma 4.3.6 in \cite{L})
 Since  $P(s)^{1/2}=a^{1/2}s^{m/2}(1+o(1))$  for large $|s|$, it follows that
  \begin{equation}\label{Zestimate} 
   Z(z)=\int_{R_0e^{i\theta_k}}^zP(s)^{\frac{1}{2}} ds=\frac{2a^{\frac{1}{2}}}{N}z^{\frac{N}{2}}(1+o(z)) \text{ for large $|z|$.}
  \end{equation}
 Thus, the auxilliary map
   $$\xi=\frac{2a^{\frac{1}{2}}}{N}z^{\frac{N}{2}}$$
   maps the sector
    $$\cals_1=\{z:\ |z|>R_{0}, |\arg z-\theta_k|<\frac{2\pi}{N}-\frac{\epsilon}{2}\}$$
    univalently onto the sector
    $$
    \Sigma'=\{\xi:\ |\xi|>R_{0}', |\arg \xi|<\pi-\frac{N\epsilon}{4}\}
    $$
    in the $\xi$ plane for some large $R_{0}'>0$.
  From equation (\ref{Zestimate}), it follows that $|Z(z)-\xi(z)|=o(|z|^{N/2})$ on $\cals$ and therefore that     $Z(z)$ is univalent on $\cals$,  so that its image in the $Z$ plane contains a sector  of the form $\calt$ as well.
 \end{proof}

 Next,  the
 Liouville transformation,
 $$W(Z)=P(z)^{\frac{1}{4}}w(z),$$ transforms  equation (\ref{ode})  into to a new one for $W(Z)$ as follows:
 \begin{equation}\label{Liou}
 W''(Z)+(1-F(Z) )W(Z)=0, \, \mbox{ where } F(Z)=\frac{1}{4}\frac{P''(z)}{P(z)^2} - \frac{5}{16}\frac{P'(z)^2}{P(z)^3}.
 \end{equation}
 For $R'>>0$ and  $\delta\in(0,\pi)$, let $\calt=\{ Z: \ |Z|>R', |\arg Z -\pi|  >  \delta  \}$ be a sector in the $Z$ plane. On $\calt$,  $F(Z)=\rm{O}(1/Z^2)$ so that asymptotically,    the solutions to   equation~(\ref{Liou})  are asymptotic to the solutions of equation~(\ref{toy}).  The solution space is generated by
 \begin{equation}\label{asypath}
 W_1(Z)= e^{iZ}(1+\rm{O}(1/|Z|)) \mbox{  and } W_2(Z)= e^{-iZ}(1+\rm{O}(1/|Z|)).
 \end{equation}
  Each $W_i(z)$ has two asymptotic values, and the sector $\mathcal{T}$ contains their asymptotic tracts separated by the critical ray: the positive real line.
   Pulling back to the $z-$plane by the map $Z(z)$, we obtain two linearly independent ``principal solutions''
 $$
  w_{i} (z)=P(z)^{-1/4}W_i(Z),  \, \, i=1,2,$$ of the original second order equation, equation~(\ref{ode}),  defined in a sector $\cals_{2}$ of the $z$ plane satisfying,
 $$
 \cals_2 =\{z : \,  | z| >R, \, \, |\arg z - \theta_k | < \frac{2\pi}{N}  - \delta'  \}
 $$
 where $R$ is a large constant and  $\delta'$  is a small constant depending on $\delta$.
  
Using  the asymptotic expressions in equation (\ref{asypath}), we see that
\begin{equation}\label{Ff} F(Z)= \frac{AW_1(Z)+BW_2(Z)}{CW_1(Z)+DW_2(Z)}\sim \frac{Ae^{iZ}+Be^{-iZ}}{Ce^{iZ}+De^{-iZ}}\end{equation}
has two asymptotic values with asymptotic tracts separated by the positive real line.   Since, by  Lemma \ref{lan4.3.6},  $Z(z)$ is univalent,
$$f(z)=F(Z(z))=\frac{Aw_1(z)+Bw_2(z)}{Cw_1(z)+Dw_2(z)}=\frac{AW_1(Z)+BW_2(Z)}{CW_1(Z)+DW_2(Z)}$$ has two asymptotic values with asymptotic tracts separated by the critical  ray $\rho_{k} (t)$ in the sector $\cals_2$.

\begin{remark} Since there are $N$ solutions to the congruence, there are $N$ possible choices for $\theta_k$.  Applying the above transformation theory to each defines a sector in the $z$-plane containing a central critical ray and bounded by adjacent rays.   The pullback solutions for each have two asymptotic values with asymptotic tracts in the complement of the critical ray. Pairs of adjacent sectors overlap on one asymptotic tract.  Thus, $f$ has $N$ asymptotic values.
\end{remark}

Equation~(\ref{asypath}) also shows that the   $w_i$ have no zeros in the sectors where they are defined but that, for any $A, B\in \mathbb{C}^*$, the equation  $Aw_1+Bw_2=0$ has infinitely many zeros. We next show that these zeroes accumulate along the critical rays. 

\medskip
   \begin{prop}\label{polesdist}  Let $w_1, w_2$ be the principal solutions defined in the sector $\cals$ containing the critical ray  $\rho_{k} (t)$.  If   $A,B$ are non-zero constants and    $w=Aw_1 +  Bw_2$ then $w=0$ has infinitely many solutions $s_j$ in $\cals$.    Label them so that $\cdots \le |s_j| \leq |s_{j+1} |\le \cdots$.  Then, $ |s_j| \sim  \text{O}(|j|^{2/N})$ and  $\lim_{j\to \infty} |\arg s_j - \theta_k | =0$, \end{prop}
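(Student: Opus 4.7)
The plan is to locate the zeros in the $Z$-plane, where the principal solutions reduce to perturbations of the exponentials $e^{\pm iZ}$, and then pull the zeros back to the $z$-plane using the univalent change of variable $Z(z)$.

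Working in $\calt$, I would rewrite the equation $Aw_1(z)+Bw_2(z)=0$ using the asymptotic expansion (\ref{asypath}) as
$$Ae^{iZ}(1+O(1/|Z|))+Be^{-iZ}(1+O(1/|Z|))=0,$$
equivalently $e^{2iZ}=-\tfrac{B}{A}(1+\eta(Z))$ with $\eta(Z)=O(1/|Z|)$. The unperturbed equation $e^{2iZ}=-B/A$ has the equally spaced solutions
$$Z_j^{0}=\frac{1}{2i}\Log\!\left(-\frac{B}{A}\right)+\pi j,\qquad j\in\ZZ,$$
all lying on a single horizontal line in the $Z$-plane with bounded imaginary part. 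Applying Rouch\'e's theorem on a disk $D(Z_j^{0},r)$ of small fixed radius---and using that $|e^{2iZ}+B/A|$ is bounded below on each circle $\partial D(Z_j^{0},r)$ uniformly in $j$ (by the periodicity of $e^{2iZ}$ together with the common imaginary part of the $Z_j^{0}$), while the perturbation $\tfrac{B}{A}\eta(Z)$ tends to $0$ as $|Z|\to\infty$---I would conclude that for $|j|$ sufficiently large each such disk contains exactly one actual zero $\zeta_j$, and that no further zeros occur inside $\calt$ outside $\bigcup_j D(Z_j^{0},r)$. Since $\calt$ excludes a sector about $\arg Z=\pi$, only the $\zeta_j$ with $j\ge j_0$ survive, and they all satisfy $\arg\zeta_j\to 0$.

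Next I would pull back, setting $s_j=Z^{-1}(\zeta_j)$, which is well defined by the univalence in Lemma~\ref{lan4.3.6}. Inverting the asymptotic $Z(z)\sim\tfrac{2a^{1/2}}{N}z^{N/2}$ from that same lemma yields
$$s_j=\left(\frac{N\zeta_j}{2a^{1/2}}\right)^{\!2/N}(1+o(1)),\qquad |\zeta_j|=\pi j+O(1),$$
so $|s_j|=O(j^{2/N})$ as claimed. Because the branch of $P(s)^{1/2}$ was fixed so that $(az^N)^{1/2}$ is real and positive along $\rho_k$, the asymptotic direction $\arg Z\to 0$ pulls back to $\arg z\to\theta_k$, yielding $\arg s_j\to\theta_k$. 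Relabelling so that $|s_j|$ is non-decreasing gives the stated enumeration.

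The main obstacle is the uniformity in $j$ of the Rouch\'e estimate: one must verify that $|e^{2iZ}+B/A|$ stays bounded below on the circles $\partial D(Z_j^{0},r)$ independently of $j$, which is where the periodicity of $e^{2iZ}$ and the boundedness of $\mathrm{Im}(Z_j^{0})$ are essential. A secondary subtlety is checking that no additional zeros of $w$ lie in $\cals$ outside the pull-back of $\calt$; since $\cals$ in Lemma~\ref{lan4.3.6} is only slightly larger than $Z^{-1}(\calt)$, the expansion (\ref{asypath}) still governs the zero set there, so the enumeration above is exhaustive.
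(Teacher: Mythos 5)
Your argument is essentially the same as the paper's: both locate the zeros in the $Z$-plane near the equally spaced points $\tfrac{1}{2i}\Log(-B/A)+\pi j$ using the asymptotic $W_1/W_2=e^{2iZ}(1+O(1/|Z|))$, and then pull back to the $z$-plane via the asymptotics of $Z(z)$ from Lemma~\ref{lan4.3.6} to obtain $|s_j|\sim c_1|j|^{2/N}$ and $\arg s_j\to\theta_k$. The paper packages this by introducing $G(z)=\tfrac{1}{2i}\log(W_1/W_2)=Z(z)+o(1)$ and solving $G(s_j)=\tfrac{1}{2i}\log(-B/A)+j\pi$ directly, while you instead invoke Rouch\'e on disks around the unperturbed solutions; this supplies a justification for the one-to-one correspondence between integers $j$ and zeros $s_j$ that the paper leaves implicit, but it is the same idea executed with a bit more care rather than a genuinely different route.
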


\begin{proof}(See also Page $64$ in \cite{L})
Set
 $G(z)=\frac{1}{2i} \log \frac{W_1}{W_2}$. By lemma~\ref{lan4.3.6},  if   $z \in  \cals$ is sufficiently large,
 $$  G(z)=Z(z)+\text{o}(1)= \frac{2a^{\frac{1}{2}}}{N} z^{\frac{N}{2}}(1 + \text{o}(1)) . $$
 Furthermore, the zeroes $s_j$ of $w$,  satisfy
 $$2iG(s_j) = \log(-\frac{B}{A}) +2 j \pi i, \text{ or } G(s_j) = \frac{1}{2i}\log( -\frac{B}{A}) + j \pi;$$ that is $G(s_j)$ lies near the positive line.

Combining these, we have,
 $$\frac{2a^{\frac{1}{2}}}{N} s_j^{\frac{N}{2}}(1 + \text{o}(1))=\frac{1}{2i}\log(-\frac{B}{A}) + j \pi.$$
 Hence as $j \to \infty$, $\arg s_j \sim \theta_k$;  and there is a constant $c_1$ such that $s_j \sim c_1 |j|^{2/N}$.
 \end{proof}
 
For any function 
\begin{equation}\label{soln}  f(z)=\frac{aw_1 +bw_2}{cw_1+dw_2}
\end{equation}
 its poles are zeros of $cw_1+dw_2$, and the estimate of the residue at each pole can be computed using the definition:   $Res(f,s_j) = \lim_{z \to s_j} (z-s_j)f(z)$.  
 
 \begin{prop}[Page $6$ in \cite{C}]\label{residue}  Let $f$ be as in equation~(\ref{soln}).  Denote the poles of $f$ in $S$ and their respective residues by $s_j$ and $r_j$, and assume the poles are labeled so that $\cdots \le | s_j| \le |s_{j+1}| \le \cdots $. Then
$$r_j = \frac{1}{2i} \big( \frac{a}{c}- \frac{b}{d}\big) P(s_j)^{-\frac{1}{2}} \sim c_2 \cdot s_j^{-\frac{N-2}{2}}, $$
for some constant $c_2$.
\end{prop}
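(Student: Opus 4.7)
The plan is to compute $r_j$ from the standard residue formula for a simple pole of a quotient of holomorphic functions, and then extract the asymptotic behavior by invoking the normalizations from the Liouville transformation together with the fact that the Wronskian of the principal solutions is constant.

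First I would write
\[
r_j=\operatorname{Res}(f,s_j)=\frac{aw_1(s_j)+bw_2(s_j)}{c\,w_1'(s_j)+d\,w_2'(s_j)}.
\]
Because $s_j$ is a zero of $cw_1+dw_2$, one has $w_1(s_j)=-(d/c)\,w_2(s_j)$, so the numerator simplifies to $\frac{bc-ad}{c}w_2(s_j)$. For the denominator I would use the Wronskian identity: since $w_1,w_2$ both satisfy $w''+P(z)w=0$ (no first-order term), Abel's identity gives $W(w_1,w_2)=w_1w_2'-w_1'w_2=\text{const}$, and a direct computation using $w_i=P^{-1/4}W_i$ together with the leading expansions in equation (\ref{asypath}) shows that this constant equals $-2i$ exactly (the $P'$-contributions cancel, and the Wronskian $W_1W_2'-W_1'W_2$ is constant for the transformed equation~(\ref{Liou}) and asymptotic to $-2i$, hence identically $-2i$). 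Substituting $w_1(s_j)=-(d/c)w_2(s_j)$ into the Wronskian relation at $s_j$ then gives
\[
c\,w_1'(s_j)+d\,w_2'(s_j)=\frac{2ic}{w_2(s_j)}.
\]

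Putting these together yields $r_j=\frac{bc-ad}{2ic^{2}}\,w_2(s_j)^{2}$. To convert this into the form stated, I would rewrite
\[
r_j=\frac{bc-ad}{2ic^{2}}\cdot\frac{w_2(s_j)}{w_1(s_j)}\cdot w_1(s_j)w_2(s_j)
   =\frac{1}{2i}\!\left(\frac{a}{c}-\frac{b}{d}\right)w_1(s_j)w_2(s_j),
\]
using once more $w_2(s_j)/w_1(s_j)=-c/d$. Finally, the Liouville normalization $w_i(z)=P(z)^{-1/4}W_i(Z)$ with $W_i(Z)=e^{\mp iZ}(1+\mathrm{O}(1/|Z|))$ gives
\[
w_1(z)w_2(z)=P(z)^{-1/2}\,W_1(Z)W_2(Z)=P(z)^{-1/2}\bigl(1+\mathrm{O}(1/|Z(z)|)\bigr),
\]
and since $|Z(s_j)|\to\infty$ along the critical ray by Lemma~\ref{lan4.3.6} and Proposition~\ref{polesdist}, this produces the claimed leading term $r_j\sim\frac{1}{2i}\!\left(\frac{a}{c}-\frac{b}{d}\right)P(s_j)^{-1/2}$. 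The power-law estimate then follows immediately from $P(z)=az^{N-2}(1+\mathrm{o}(1))$ and $|s_j|\sim|j|^{2/N}$, with $c_2=\frac{1}{2i}\!\left(\frac{a}{c}-\frac{b}{d}\right)a^{-1/2}$.

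The main obstacle I anticipate is the bookkeeping around the Wronskian: namely, verifying that with the specific normalization of $w_1,w_2$ used in the paper the constant in Abel's identity is exactly $-2i$, and not just asymptotically so. Once that is pinned down, the rest is elementary algebra combined with the asymptotic expansions already established for the principal solutions, and the product identity $w_1w_2\sim P^{-1/2}$ drops out as an immediate consequence of those normalizations.
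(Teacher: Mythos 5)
The paper does not actually prove Proposition~\ref{residue}; it cites page~6 of \cite{C} and the surrounding textbook literature. Your argument is a correct and self-contained reconstruction of the standard computation behind that citation. The residue formula for a simple pole of a quotient, the algebraic reduction using $cw_1(s_j)+dw_2(s_j)=0$, and the use of Abel's identity are all exactly right; the key normalization you isolate --- that the $P'$-terms cancel so that $w_1w_2'-w_1'w_2$ coincides with the $Z$-Wronskian $W_1W_2'-W_1'W_2$, which is constant and asymptotic to $-2i$, hence exactly $-2i$ --- is the crux, and you handle it correctly. The final passage from $w_1(s_j)w_2(s_j)=P(s_j)^{-1/2}W_1W_2=P(s_j)^{-1/2}\bigl(1+\mathrm{O}(1/|Z(s_j)|)\bigr)$ to the stated asymptotic is also right, since $|Z(s_j)|\to\infty$ along the sector. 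One small caveat: the displayed equality $r_j=\frac{1}{2i}\bigl(\frac{a}{c}-\frac{b}{d}\bigr)P(s_j)^{-1/2}$ in the proposition should really be read as an asymptotic (as your own derivation makes clear, the exact value carries the factor $W_1(Z(s_j))W_2(Z(s_j))$), and your closing formula $c_2=\frac{1}{2i}\bigl(\frac{a}{c}-\frac{b}{d}\bigr)a^{-1/2}$ reuses the symbol $a$ both for the M\"obius coefficient in~(\ref{soln}) and for the leading coefficient of $P$; these are different quantities, so a different letter (or explicit branch bookkeeping for $P^{-1/2}$) would avoid confusion. Neither issue affects the correctness of the argument.
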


 From the two propositions above,  it follows that the relation between the residues and the poles is
 \begin{equation}\label{pole-res} | r_j | \sim c_2|s_j|^{-\frac{N-2}{2}} \sim c_3 |j|^{-\frac{N-2}{N}}.
 \end{equation}

\medskip
\begin{prop}\label{derivatzeros} As above,  let $f$ be a solution of $S(f)=2P$ where $P$ is a polynomial of degree $m$.  Set $N=m+2$.  For any $z_0\in \mathbb{C}$, denote its preimages  in a given sector $S$  by $p_j$ and label them so that $\cdots \le | p_j| \le |p_{j+1}| \le \cdots $. Then there exists a constant $c_4>0$ such that $|f'(p_j)|\sim c_4j|^{(N-2)/N}$.
\end{prop}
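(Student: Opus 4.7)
The plan is to reduce the estimate on $|f'(p_j)|$ to the residue estimate already established in Proposition~\ref{residue} by passing to the reciprocal function $1/(f-z_0)$. First I would rewrite
$$
f(z)-z_0 \;=\; \frac{(a-z_0 c)\,w_1(z)+(b-z_0 d)\,w_2(z)}{c\,w_1(z)+d\,w_2(z)},
$$
so that
$$
\frac{1}{f(z)-z_0} \;=\; \frac{c\,w_1(z)+d\,w_2(z)}{(a-z_0 c)\,w_1(z)+(b-z_0 d)\,w_2(z)}
$$
is again of the form of equation~(\ref{soln}). Its poles in the sector $\cals$ are exactly the preimages $p_j$ of $z_0$.

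Next, I would apply Proposition~\ref{polesdist} to the linear combination $(a-z_0 c)w_1+(b-z_0 d)w_2$ in the denominator; its coefficients are both nonzero because $w_1,w_2$ are linearly independent (if both vanished, $z_0$ would equal two distinct constants, a contradiction unless $a=b=c=d=0$). This immediately yields $|p_j|\sim c_1\,|j|^{2/N}$. Then, applying Proposition~\ref{residue} to $1/(f-z_0)$ with the new coefficients $(c,d,a-z_0c,b-z_0d)$, the residue of $1/(f-z_0)$ at $p_j$ satisfies
$$
\bigl|\mathrm{Res}\bigl(\tfrac{1}{f-z_0},\,p_j\bigr)\bigr|
\;\sim\; \tilde c_2\,|p_j|^{-(N-2)/2},
$$
where $\tilde c_2$ is a nonzero constant: the algebraic simplification of $\frac{c}{a-z_0c}-\frac{d}{b-z_0d}=\frac{-(ad-bc)}{(a-z_0c)(b-z_0d)}$ is nonzero since $ad-bc\neq 0$.

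To finish, I would observe that each $p_j$ is a simple zero of $f-z_0$: indeed, a direct Wronskian computation gives
$$
f'(z) \;=\; \frac{-(ad-bc)\,W(w_1,w_2)}{(c\,w_1(z)+d\,w_2(z))^2},
$$
and since $W(w_1,w_2)$ is a nonzero constant (being the Wronskian of linearly independent solutions of~(\ref{ode})), $f'$ never vanishes — consistent with $f$ being a Nevanlinna function with no critical points. Hence $\mathrm{Res}(1/(f-z_0),p_j)=1/f'(p_j)$, and combining the two asymptotics gives
$$
|f'(p_j)| \;\sim\; \tilde c_2^{-1}\,|p_j|^{(N-2)/2} \;\sim\; c_4\,|j|^{(N-2)/N},
$$
as required. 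The main (minor) obstacle is the bookkeeping of the constants when substituting into Proposition~\ref{residue} and verifying that the new numerator and denominator coefficients still satisfy the nondegeneracy hypothesis of that proposition; this is exactly what the Wronskian identity and linear independence of $w_1,w_2$ guarantee.
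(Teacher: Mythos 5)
Your proof is correct and follows essentially the same route as the paper: pass to $g=1/(f-z_0)$ (which has the same Schwarzian, hence is again of the form~(\ref{soln})), apply Proposition~\ref{residue} to estimate $\mathrm{Res}(g,p_j)$, and use $|f'(p_j)|=1/|\mathrm{Res}(g,p_j)|$ since each $p_j$ is a simple zero of $f-z_0$. You spell out more of the bookkeeping than the paper does — in particular the explicit shifted coefficients $(c,d,a-z_0c,b-z_0d)$, their nondegeneracy via $ad-bc\neq 0$, and the Wronskian identity showing $f'$ never vanishes — which is a welcome clarification of the paper's terser argument, not a departure from it.
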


\begin{proof}
Let $$g(z)=\frac{1}{f(z)-z_0};$$ then $S(g)=S(f)=2P$.   If $p_j$ are solutions of $f(z)=z_0$, they are poles of $g(z)$ so that  proposition~\ref{residue} implies that $|\text{res}(g, p_j)| \sim c_3|j|^{-(N-2)/N}$. Furthermore, since $g(z)=1/(f(z)-z_0)$,  a simple computation shows that  $$|f'(p_j)|=\frac{1}{|\text{res}(g, p_j)|}\sim c_4|j|^{\frac{N-2}{N}}$$
\end{proof}

In the proofs of our results  we will repeatedly use the Koebe distortion theorems to obtain estimates on the behavior of the Nevanlinna functions at regular points.  Many proofs exist in the standard literature on conformal mapping.  (See e.g. \cite{Ahlfors}, Theorem 5.3.) For the reader's convenience, we state the theorems here without proof. 

\begin{thm}[Koebe Distortion Theorem]
Let $f: D(z_0, r)\to \mathbb{C}$ be a univalent function, then for any $\eta<1$,
\begin{enumerate}
\item $\displaystyle |f'(z_0)|\frac{\eta r}{(1+\eta)^2}\leq |f(z)-f(z_0)|\leq |f'(z_0)|\frac{\eta r}{(1-\eta)^2}$, $z\in D(z_0, \eta r)$,
\item If $\displaystyle T(\eta)=\frac{(1+\eta)^4}{(1-\eta)^4}$, $\displaystyle \frac{|f'(z)|}{|f'(w)|}\leq T(\eta)$, for any $z, w\in D(z_0, \eta r)$.
\end{enumerate}
\end{thm}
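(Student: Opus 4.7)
The plan is to reduce to the classical normalized class $\cals$ of univalent functions on the unit disk and then invoke the two standard Koebe estimates for that class. Given $f$ univalent on $D(z_0,r)$, I would set
$$
g(\zeta) = \frac{f(z_0+r\zeta)-f(z_0)}{r\,f'(z_0)}, \qquad \zeta\in D(0,1),
$$
so that $g$ is univalent with $g(0)=0$ and $g'(0)=1$, i.e.\ $g\in\cals$. Every estimate for $f$ on $D(z_0,\eta r)$ then translates directly to an estimate for $g$ on $D(0,\eta)$ via the substitution $z=z_0+r\zeta$, with $r\,|f'(z_0)|$ absorbing the scaling for $f$ and $|f'(z_0)|$ for $f'$.

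Next I would apply the two classical $\cals$-class inequalities,
$$
\frac{|\zeta|}{(1+|\zeta|)^2}\le |g(\zeta)|\le \frac{|\zeta|}{(1-|\zeta|)^2},\qquad \frac{1-|\zeta|}{(1+|\zeta|)^3}\le |g'(\zeta)|\le \frac{1+|\zeta|}{(1-|\zeta|)^3}.
$$
Using $|\zeta|\le \eta$ and the monotonicity of these expressions in $|\zeta|$, the first row gives part (1) at once after multiplying through by $r\,|f'(z_0)|$. For part (2), the chain rule gives $f'(z)=f'(z_0)\,g'(\zeta)$ with $z=z_0+r\zeta$, so the ratio $|f'(z)|/|f'(w)|$ equals $|g'(\zeta)|/|g'(\zeta')|$ for the corresponding $\zeta,\zeta'\in D(0,\eta)$; bounding the numerator from above and the denominator from below by the second row yields
$$
\frac{|g'(\zeta)|}{|g'(\zeta')|}\le \frac{1+\eta}{(1-\eta)^3}\cdot\frac{(1+\eta)^3}{1-\eta}=\frac{(1+\eta)^4}{(1-\eta)^4}=T(\eta),
$$
which is precisely the required bound.

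The only substantive input is the pair of $\cals$-class estimates, and the hard step there is Bieberbach's inequality $|a_2|\le 2$ for $g(\zeta)=\zeta+a_2\zeta^2+\cdots\in\cals$. Given this, the derivative estimate is obtained by the standard disk-automorphism trick: for fixed $a\in D(0,1)$ the function $G(\zeta) = [g(\phi_a(\zeta))-g(a)]/[(1-|a|^2)g'(a)]$, with $\phi_a(\zeta)=(\zeta+a)/(1+\bar a\zeta)$, again belongs to $\cals$, and expanding $G$ to second order together with $|a_2(G)|\le 2$ gives a pointwise bound on $\bigl|(1-|a|^2)\,g''(a)/g'(a) - 2\bar a\,\bigr|$; radial integration of this differential inequality produces the two-sided bound on $|g'|$, and integrating $|g'|$ once more along a radius yields the growth bound. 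The main obstacle in the whole chain is Bieberbach's inequality itself, classically proved by applying the area theorem to a suitably chosen branch of $\sqrt{g(\zeta^2)}/\zeta$; this is standard enough that I would simply cite it, as the authors do, rather than reprove it here.
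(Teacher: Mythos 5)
The paper gives no proof of this theorem---it is stated as a standard result with a citation to Ahlfors---and your argument (renormalize to the class $\mathcal{S}$ via $g(\zeta)=[f(z_0+r\zeta)-f(z_0)]/[r f'(z_0)]$ and invoke the classical growth and distortion theorems) is exactly the standard one the citation points to; your derivation of part (2) and of the upper bound in part (1) is correct.

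There is, however, one genuine gap. The lower bound in (1) does not follow from your reasoning, and in fact is false as stated over the whole disk $D(z_0,\eta r)$: since $|f(z)-f(z_0)|\to 0$ as $z\to z_0$, no positive constant can be a uniform lower bound there. The growth theorem gives $|g(\zeta)|\ge |\zeta|/(1+|\zeta|)^2$, and the map $t\mapsto t/(1+t)^2$ is \emph{increasing} on $(0,1)$, so over $|\zeta|\le\eta$ this lower estimate degenerates to $0$ rather than to the constant $\eta/(1+\eta)^2$. Your phrase ``the first row gives part (1) at once'' by monotonicity therefore applies monotonicity in the wrong direction for the lower estimate; the bound $\eta/(1+\eta)^2$ is obtained only on the circle $|\zeta|=\eta$. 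The theorem as stated in the paper shares this imprecision---the correct uniform replacement is the covering statement $f(D(z_0,\eta r))\supset D\bigl(f(z_0),\,|f'(z_0)|\eta r/(1+\eta)^2\bigr)$---and since only the upper bound of (1) and part (2) are actually used in the paper's arguments, nothing downstream is affected, but a careful proof should flag this restriction rather than claim the two-sided bound follows ``at once.''
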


\begin{thm}[Koebe 1/4 Theorem]
Let $ f: D(z_0, r)\to \mathbb{C}$ be a univalent function, then $$D \Big(f(z_0),\frac{r|f'(0)|}{4} \Big)\subset f(D(z_0, r)). $$
\end{thm}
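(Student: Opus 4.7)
The plan is to reduce to the normalized Koebe class $S$ of univalent functions $F$ on the unit disk $D(0,1)$ with $F(0)=0$ and $F'(0)=1$, and to show the image of any such $F$ contains $D(0,1/4)$. The classical ingredients are Gronwall's area theorem and Bieberbach's coefficient inequality $|a_2|\leq 2$.

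\emph{Normalization.} First I would pass from $f:D(z_0,r)\to\mathbb{C}$ to
$$F(\zeta)=\frac{f(z_0+r\zeta)-f(z_0)}{r\,f'(z_0)},\qquad \zeta\in D(0,1),$$
which lies in $S$. The desired inclusion $D(f(z_0),r|f'(z_0)|/4)\subset f(D(z_0,r))$ is equivalent, after this affine change of coordinates, to $D(0,1/4)\subset F(D(0,1))$, so it suffices to treat the normalized case.

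\emph{Omitted-value trick.} Suppose, for contradiction, that some $c\in D(0,1/4)$ lies outside $F(D(0,1))$. Then
$$G(z)=\frac{c\,F(z)}{c-F(z)}$$
is holomorphic and univalent on $D(0,1)$, and a direct check gives $G(0)=0$ and $G'(0)=1$, so $G\in S$. Writing $F(z)=z+a_2z^2+a_3z^3+\cdots$, a short computation yields $G(z)=z+\bigl(a_2+\tfrac{1}{c}\bigr)z^2+\cdots$. Applying Bieberbach's inequality to both $F$ and $G$ gives $|a_2|\leq 2$ and $|a_2+1/c|\leq 2$, and the triangle inequality then forces $|1/c|\leq 4$, contradicting $|c|<1/4$.

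\emph{Main obstacle.} The substantive step is Bieberbach's inequality, and this is where I expect the real work to lie. The route I have in mind is as follows. Since $F\in S$ has $F(z)/z$ zero-free on $D(0,1)$, one may define an odd univalent branch $h(z)=\sqrt{F(z^2)}=z+\tfrac{a_2}{2}z^3+\cdots$ on $D(0,1)$. Passing to $\psi(\zeta)=1/h(1/\zeta)$, which is univalent on $\{|\zeta|>1\}$ with expansion $\psi(\zeta)=\zeta+b_1/\zeta+b_3/\zeta^3+\cdots$ where $b_1=-a_2/2$, Gronwall's area theorem (obtained by computing the area of the complement of $\psi(\{|\zeta|>1\})$ via Green's theorem and requiring it to be nonnegative) gives
$$\sum_{n\geq 1}n\,|b_n|^2\leq 1.$$
In particular $|b_1|\leq 1$, that is, $|a_2|\leq 2$, as required. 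Everything else in the argument is algebraic; the analytic content sits entirely in the area theorem.
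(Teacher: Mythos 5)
The paper states the Koebe $1/4$ Theorem without proof, citing Ahlfors's book, so there is no argument of its own to compare against; your job was to supply the classical proof, and you did so correctly. The chain you give --- normalize to the class $S$ by the affine change $F(\zeta)=(f(z_0+r\zeta)-f(z_0))/(rf'(z_0))$, then the omitted-value M\"obius trick $G=cF/(c-F)$ together with Bieberbach's bound $|a_2|\le 2$, which in turn comes from the odd square-root transform $h(z)=\sqrt{F(z^2)}$, the inversion $\psi(\zeta)=1/h(1/\zeta)$, and Gronwall's area theorem $\sum n|b_n|^2\le 1$ --- is exactly the standard route found in Ahlfors and every other reference, and all the computations you sketch ($G'(0)=1$, $b_1=-a_2/2$, the triangle inequality forcing $|1/c|\le 4$) check out. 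One small thing worth flagging, though it concerns the statement rather than your proof: as printed in the paper the radius is $r|f'(0)|/4$, which is surely a typo for $r|f'(z_0)|/4$; your normalization correctly targets the $|f'(z_0)|$ version, which is both the true statement and the one actually used elsewhere in the paper.
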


\section{The Main Theorem}
Let $\calf_N$ be the set of Nevanlinna functions with $N$ asymptotic values.  For $f  \in \calf_N$ and $i=1, \ldots, N$  denote the  asymptotic values by $ \la_i$ and the corresponding  asymptotic tracts  by $T_i$.    Assume there is an integer $K$, $1 \leq K < N $, and  integers $p_i \geq 0$, $i=1, \ldots, K$,    such that
 $$ f^{p_i}(\la_i)=\infty, \,\, i=1, \ldots,K.$$
If $\la_i=\infty$, $p_i=0$.  This can happen for at most $N/2$ asymptotic values and the asymptotic tracts of these infinite asymptotic values must be separated by the asymptotic tract of a finite asymptotic value.
Also assume that for each $i=K+1, \cdots, N$, the accumulation set, $\omega(\la_i)$, of the orbit of $\lambda_i$, is a compact repeller;  that is, there exists a $\kappa>1$ such that for each $z \in \omega(\la_i)$, there exists an $n=n(z)$, such that $|(f^{n})'(z)|>\kappa$.  Note that this implies that these asymptotic values are finite.

Define %$N+1$ subsets in $\CC$,
$$
I=I(f)=\{z\in \mathbb{C}\ | \ f^n(z)\to \infty \},
$$
%for $i=0, \ldots, K-1$,
%$$
%I_i=I_0(f)=\{z\in \mathbb{C}\ | \ \omega(z)= Orb(\la_i) \},
%$$
and  %for $K \leq i \leq N-1$
$$
L=L(f)=\{z\in \mathbb{C}\ | \ \omega(z)= \cup_{i=K+1}^{N}\omega(\lambda_i)\}.
$$
The proof of the main theorem depends on the following  theorems:
\medskip
\begin{thm}~\label{lem1}
The set $I$ is of measure zero.
\end{thm}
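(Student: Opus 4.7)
The plan is to show that for any sufficiently large $R > 0$, the forward-invariant set
\[
A_R := \{z \in \mathbb{C} : |f^n(z)| > R \text{ for all } n \geq 0\}
\]
has Lebesgue measure zero. Since $I \subseteq \bigcup_{N \geq 0} f^{-N}(A_R)$ and preimages of null sets under the non-constant meromorphic maps $f^N$ are null, this will give $|I| = 0$.

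To show $|A_R| = 0$, I would use a Lebesgue density argument. Assume for contradiction that $|A_R| > 0$ and pick a density point $z_0 \in A_R$ that is not a prepole (the set of prepoles is countable, hence null). Since $f$ has no critical points, univalent branches of $f^{-n}$ exist on suitable disks around $f^n(z_0)$, and the Koebe distortion theorem transfers the density-$1$ condition from $z_0$ to $f^n(z_0)$ at a matched scale. I would then derive a contradiction by showing that the density of $A_R$ at every $w \in U_R$, measured at scale $r \sim \pi|w|^{-m/2}$ (where $m = \deg P$), is bounded by $1/2$.

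The local density bound uses the representation from Section~\ref{prelim} of $f$ in each tract $T_j$ as a Möbius transformation $g_j$ applied to $e^{2iZ}$, with $g_j(0) = \lambda_j$ and $Z$ the Liouville coordinate. If $\lambda_j$ is finite, then in the bulk of $T_j$ we have $|f(z) - \lambda_j| = O(e^{-2\,\text{Im}(Z)})$, so $f^{-1}(U_R) \cap T_j$ is contained in small pole-disks of $Z$-area density $O(1/R^2)$; hence the density of $A_R \subseteq f^{-1}(U_R)$ near such $w$ is much less than $1/2$. If $\lambda_j = \infty$, then $f(z) = \alpha_j + \beta_j e^{-2iZ}$, and on a $Z$-disk of radius $\pi/2$ around $Z(w)$ the argument of $f(z)$ covers all of $[0, 2\pi)$. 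Since at most $|A| \leq N/2$ of the $N$ roughly equally-spaced tracts are tracts of $\infty$, the proportion of the disk mapped into some tract of $\infty$ is at most $|A|/N \leq 1/2$; for the complementary proportion $f(z)$ lies in a finite-$\lambda$ tract, so by the previous case the next iterate $f^2(z)$ is bounded, excluding these points from $A_R$. Thus the density of $A_R$ near $w$ at this scale is $\leq 1/2$.

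The main obstacle is matching scales in the Koebe pullback. The density bound in the $\lambda_j = \infty$ case holds at the specific scale $r \sim \pi|w|^{-m/2}$ which shrinks with $|w| = |f^n(z_0)|$; the corresponding pullback disk at $z_0$ has radius $r/|(f^n)'(z_0)|$, and one must verify this shrinks to $0$ as $n \to \infty$. Using $|f'(z)| \gtrsim |f(z)| \cdot |z|^{m/2}$ in $\infty$-tracts together with the derivative estimates of Proposition~\ref{derivatzeros} near poles, one shows $|(f^n)'(z_0)| \gtrsim R^{n(1+m/2)}$ for orbits trapped in $U_R$, so the pullback disks do shrink to $0$. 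Ensuring univalence of the $f^{-n}$-branch along the orbit requires that $f^k(z_0)$ stay away from poles of $f$ at the relevant scale --- a delicate quantitative point that follows from $z_0 \in A_R$ together with $z_0$ not being a prepole.
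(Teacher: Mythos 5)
You reduce $m(I)=0$ directly to showing the forward-invariant set $A_R$ has measure zero, working with $f$ itself and extracting a density gap from the count that at most half of the $N$ asymptotic tracts are tracts of $\infty$. This is genuinely different from the paper's organization, which never looks at $A_R$: it passes to the composite maps $\sigma_l=f^{p_i+1}$ that jump over the orbit $Orb(\lambda_i)$ of each prepole asymptotic value, shows $I\subset\bigcup_l I_l$, reduces $m(I_l)=0$ to $m(E)=0$ for $E=\{z:\sigma_l^n(z)\to\infty\}$, and gets its density gap from the fact that the tracts $T_{K+1},\dots,T_N$ of the non-prepole asymptotic values are disjoint from $\sigma_l^{-1}(\mathcal{A}_R)$ precisely because $K<N$. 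Your route is more direct and, if it worked uniformly, would not need the combinatorics of the subsets $S_l$.

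There is, however, a real gap in your local density estimate. You claim the bound holds ``at every $w\in U_R$,'' but both the finite-$\lambda$ and $\lambda=\infty$ cases of your argument assume $w=f^n(z_0)$ sits deep inside an asymptotic tract, where the exponential-type model $f=g_j(e^{2iZ})$ is valid at the scale $r\sim\pi|w|^{-m/2}$. The orbit of a density point of $A_R$ can (and for escaping orbits typically will) pass close to poles of $f$, which accumulate along the critical rays separating the tracts (Proposition~\ref{polesdist}); near such a pole the governing model is the Laurent expansion $f(z)=r_s/(z-s)(1+\cdots)$, the whole pole-disk maps into $U_R$, and your $O(1/R^2)$ density bound from the finite-$\lambda$ case fails outright. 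This is exactly the case the paper handles separately in Lemma~\ref{lem3b} and Part~2 of the proof of Theorem~\ref{main1}, using the M\"obius model at the pole, Koebe control of the branches of $f^{-p_i}$ onto the disks $V'_{ij}$, and the residue/derivative asymptotics of Propositions~\ref{residue} and~\ref{derivatzeros}. A related soft spot is your $\lambda=\infty$ step: as $\arg f(z)$ sweeps $[0,2\pi)$, a positive proportion of the image lands near critical rays of $f$ rather than deep inside any tract, and for those $z$ the assertion ``$f^2(z)$ is bounded'' does not follow from your finite-$\lambda$ reasoning. Until these pole-region cases are supplied, the density-point contradiction is incomplete.
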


%\medskip
%\begin{lemma}~\label{lem3}
% Each of the sets $I_i$ is of measure zero.
%\end{lemma}

\medskip
\begin{thm}~\label{lem2}
 The set $L$ is of measure zero.
\end{thm}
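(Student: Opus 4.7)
The plan is to reduce the measure-zero claim to showing that a certain locally maximal invariant set is null, and then to dispatch the latter by a Lebesgue density / Koebe distortion / normal family argument modeled on the $K=0$ case of \cite{KK}. Set $\Omega=\bigcup_{i=K+1}^{N}\omega(\lambda_i)$, and fix a bounded open neighborhood $U$ of $\Omega$ chosen small enough to miss the poles of $f$ and the finite postsingular orbits $\{f^{j}(\lambda_i)\}_{j=0}^{p_i}$ of the prepole asymptotic values. Define
$$W=\{z\in U\ :\ f^n(z)\in U\ \text{for all}\ n\geq 0\}.$$
Every $z\in L$ has $\omega(z)=\Omega\subset U$, so eventually its orbit lies in $U$; hence $L\subseteq\bigcup_{M\geq 0}f^{-M}(W)$. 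Since $f$ is meromorphic, preimages of null sets are null, so it suffices to prove $|W|=0$.

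By compactness of $\Omega$ and continuity of derivatives, after possibly shrinking $U$ the repeller hypothesis supplies a uniform expansion rate: there exist $\kappa'>1$ and $N_0\in\NN$ such that every $\zeta\in U$ admits some $n(\zeta)\leq N_0$ with $|(f^{n(\zeta)})'(\zeta)|>\kappa'$, and with $f^{n(\zeta)}$ univalent on a disk $D(\zeta,\rho_0)$ for a uniform $\rho_0>0$ (the existence of uniform Koebe room being the quantitative input from the asymptotic estimates of Section \ref{prelim}). Assume for contradiction $|W|>0$ and pick a Lebesgue density point $z_0\in W$. Concatenating expansion steps along the orbit of $z_0$ yields times $m_k\to\infty$ with $|(f^{m_k})'(z_0)|\geq(\kappa')^k$. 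Pasting the corresponding univalent inverse branches produces univalent maps $\phi_k:D(f^{m_k}(z_0),\rho_0)\to\CC$ sending $f^{m_k}(z_0)$ back to $z_0$, whose intermediate images $f^n\circ\phi_k$ for $0\leq n\leq m_k$ remain in $U$, and whose image domains shrink to $z_0$ with diameter $\asymp\rho_0/|(f^{m_k})'(z_0)|\to 0$ by Koebe distortion.

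Because the intermediate images stay in $U$, the equivalence $w\in W\iff \phi_k(w)\in W$ holds for every $w\in D(f^{m_k}(z_0),\rho_0/2)$, so Koebe distortion transfers the density $1$ of $W$ at $z_0$ into the relative density of $W$ in $D(f^{m_k}(z_0),\rho_0/2)$ tending to $1$. Extracting a subsequence with $f^{m_k}(z_0)\to\zeta\in\overline{U}$, we conclude that $D(\zeta,\rho_0/4)\subseteq W$ modulo a null set. Consequently $f^n(D(\zeta,\rho_0/4))\subseteq\overline{U}$ on a dense set for every $n$; since values near poles are unbounded, $f^n$ can have no pole in $D(\zeta,\rho_0/4)$, so the family $\{f^n\}$ is holomorphic there with values in the bounded set $\overline{U}$, hence a normal family. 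Cauchy's estimate bounds $|(f^n)'|$ uniformly on $D(\zeta,\rho_0/8)$. But any $z^*\in W\cap D(\zeta,\rho_0/8)$ (existing by full density) satisfies $|(f^{m^*_k})'(z^*)|\geq(\kappa')^k$ by the same iteration scheme applied at $z^*$, a contradiction. Hence $|W|=0$ and $|L|=0$.

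The principal obstacle is securing the uniform Koebe radius $\rho_0>0$ for the inverse-branch construction in the mixed regime $0<K<N$. One must choose $U$ simultaneously (i) a neighborhood of $\Omega$ containing the recurrent asymptotic values $\lambda_{K+1},\dots,\lambda_N$, (ii) bounded away from the finite forward orbits of the prepole $\lambda_i$'s and from the poles of $f$, and (iii) inside a region where the pole/residue/derivative asymptotics of Propositions \ref{polesdist}, \ref{residue} and \ref{derivatzeros} give uniform control on inverse branches along the orbit. This is precisely the generalization of the corresponding \cite{KK} lemma announced in the introduction; once it is in place, the density/normal family argument above closes the proof.
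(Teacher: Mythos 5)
Your reduction is exactly the paper's: set $\Omega=\bigcup_{i=K+1}^{N}\omega(\lambda_i)$, fix a small neighborhood (the paper's $\calk_{\epsilon/2}$, your $U$), define the locally maximal invariant set there ($\call=\bigcap_{n\ge 0}f^{-n}(\calk_{\epsilon/2})$ in the paper, $W$ in your write-up), note $L\subset\bigcup_{M\ge 0}f^{-M}(W)$ plus a null set, and then attack $m(W)$ via a Lebesgue density point and Koebe distortion along contracting inverse branches. Your final contradiction, though, is genuinely different. You argue that $D(\zeta,\rho_0/4)\subset W$ modulo a null set, then invoke Montel and a Cauchy estimate to bound $|(f^n)'|$ on a smaller disk, contradicting the expansion accumulated at $W$-points. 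The paper instead picks an open $U'$ with compact closure in $\CC\setminus\overline{\calk_\epsilon}$, uses the blow-up property of $f$ on its Julia set $\widehat{\CC}$ to obtain $f^{M}(D_k)\supset\overline{U'}$, and contradicts $f^{k}(\call)\subset\calk_{\epsilon/2}$. Both closings are legitimate once the earlier steps are secured; yours has the mild advantage of not appealing to $J(f)=\widehat{\CC}$.

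The proposal is nevertheless incomplete, and at exactly the point you flag yourself. You assume a uniform Koebe radius $\rho_0$ so that $f^{n(\zeta)}$, and hence the concatenated $f^{m_k}$, admit univalent inverse branches on disks $D(\cdot,\rho_0)$ along the orbit, and you attribute this to ``the asymptotic estimates of Section~\ref{prelim}.'' That attribution is off: Propositions~\ref{polesdist}--\ref{derivatzeros} describe pole locations, residues and derivatives near infinity, and say nothing about inverse branches near $\Omega$. The real obstruction is the postsingular set. To pull $f^{n_k}$ back univalently over $D(f^{n_k}(z_0),\rho_0)$ one needs that no point $f^{j}(\lambda_i)$, $0\le j< n_k$, $K<i\le N$, lies in that disk (the prepole values $\lambda_1,\dots,\lambda_K$ have finite orbits and are easy to avoid, and a Nevanlinna function has no critical values). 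This is precisely what the paper's preliminary \emph{claim} provides, via the nested-annuli/modulus argument that exploits the compact-repeller hypothesis on $\Omega$: there are $\epsilon>0$ and $M$ so that for $n>M$, $f^{n}(\lambda_i)\in\calk_{\epsilon/2}$ forces $f^{n}(\lambda_i)\in\Omega$. Thus inside $\calk_{\epsilon/2}$ the postsingular set reduces to $\Omega$ together with finitely many transient points, which one then excludes by shrinking $\epsilon$. Without this step, or some equivalent control on where the $\lambda_i$-orbits sit inside your $U$, the density transfer in your second paragraph is unsupported. (Your reduction of the repeller hypothesis to a uniform pair $(\kappa',N_0)$ on $U$ is fine but should also be stated: it follows from compactness of $\Omega$, continuity of $(f^{n})'$, and a finite subcover.) In short: same skeleton, a valid and slightly more self-contained alternative contradiction at the end, but the decisive technical lemma is named and deferred rather than proved.
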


Versions of these theorems are proved in \cite{KK} under the assumption that  all of the asymptotic values accumulate on a compact repeller.
%The proof of lemma~\ref{lem1} uses the same techniques as the proof of the lemmas~\ref{lem3a} and~\ref{lem3b} below.  The proof of  lemma~\ref{lem2}   is subsumed in the  proof of  theorem~\ref{main2}.

\subsection{The measure of the set $I$}

 For each $1\leq i \leq K$,
denote the orbit of the prepole asymptotic value $\la_i$ by

$$
Orb(\lambda_i)=\{\lambda_i, f(\lambda_i), \cdots, f^{p_i-1}(\lambda_i),\infty\}.
$$
If $\la_i=\infty$ for some $i$,  $Orb(\la_i)= \{ \infty \}$.

Let $S=\{1,2,\cdots, K\}$.  Since there are $2^K-1$  distinct non-empty subsets of $S$,   label them $S_l$, $l=1, \ldots,  2^K-1$ and denote the collection $\Sigma$.  For any  $S_l $,  define
$$Orb_l=\cup_{i\in S_l} Orb(\lambda_i).$$

For $S_l\in \Sigma$, where $l=1, \ldots, 2^K-1$, define
$$
I_l=I_l(f)=\{z\in \mathbb{C}\ | \ \omega(z)= Orb_l \}.
$$

\begin{thm}\label{main1}
 Each of the sets $I_l$ is of measure zero.
 \end{thm}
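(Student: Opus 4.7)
My plan is to prove $|I_l|=0$ by constructing an induced first-return map near the orbits of the prepole asymptotic values and showing that its iterates contract measure geometrically via the pole expansion together with Koebe distortion.

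First, fix $R>0$ large enough that $B_R=\{|z|>R\}\cup\{\infty\}$ is disjoint from the compact sets $\omega(\lambda_m)$ for $m\notin S_l$, and pick pairwise disjoint round neighborhoods $V_{i,j}$ of each point $f^j(\lambda_i)$ for $i\in S_l$ and $0\le j\le p_i-1$, with $V_{i,p_i-1}$ a small disk around the pole $f^{p_i-1}(\lambda_i)$. Let $U=\bigcup_{i\in S_l}V_{i,0}$. The hypothesis $\omega(z)=Orb_l$ forces, for $z\in I_l$, the forward orbit eventually to lie in $\bigl(\bigcup_{i,j}V_{i,j}\bigr)\cup B_R$ and to return to $U$ infinitely often, each time via an asymptotic tract $T_k$ with $k\in S_l$. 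Define the first-return map $\hat G:U\to U$ by $\hat G(z)=f^{p_j+1}(z)$ for $z\in V_{j,0}$ (restricted to orbits remaining in $I_l$). Since $f|_{T_k}:T_k\to\CC\setminus\{\lambda_k\}$ is a universal cover, the preimage of $V_{k,0}$ inside $T_k$ breaks into fundamental ``sheets'' $W_k^{(\ell)}$ indexed by $\ell\in\ZZ$; correspondingly $V_{j,0}$ decomposes (up to sets of measure zero) into pieces $D_{j,k}^{(\ell)}$ on each of which $\hat G$ is univalent onto $V_{k,0}$.

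The heart of the argument is the estimate of $|D_{j,k}^{(\ell)}|$. The derivative of $\hat G$ picks up two large factors: from the pole $p=f^{p_j-1}(\lambda_j)$, where $f\sim r/(z-p)$ gives $|f'|\sim|w|^2/|r|$ with $w=f^{p_j}(z)$; and from the asymptotic tract, where the Liouville analysis of Section~\ref{prelim} yields $|f'(w)|\sim|w|^{(N-2)/2}\,|f(w)-\lambda_k|$. Using the sheet location $|w|\gtrsim(1+|\ell|)^{2/N}$ obtained as in Proposition~\ref{polesdist}, together with Koebe distortion applied to the univalent inverse branch of $\hat G$ on $V_{k,0}$, a change of variables to the rectifying coordinate $Z$ of Lemma~\ref{lan4.3.6} produces an estimate of the form
\[
|D_{j,k}^{(\ell)}|\;\le\;\frac{C\,|V_{k,0}|}{(R_0+|\ell|)^{1+4/N}},
\]
where $R_0\to\infty$ as $R\to\infty$. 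Since $\sum_{\ell\in\ZZ}(R_0+|\ell|)^{-1-4/N}=O(R_0^{-4/N})$, summing over $\ell$ and $k\in S_l$ gives $|\hat G^{-1}(U)\cap V_{j,0}|\le q\,|V_{j,0}|$ with $q=O(R_0^{-4/N})\to 0$ as $R\to\infty$.

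Choosing $R$ so large that $C_Kq<1$ (with $C_K$ a universal Koebe distortion constant), the same estimate applied inductively to $P_m=\{z\in U:\hat G^n(z)\in U,\;0\le n\le m\}$ gives $|P_m|\le(C_Kq)^m|U|$, hence $|\bigcap_m P_m|=0$. Every $z\in I_l$ satisfies $f^{n_0}(z)\in\bigcap_m P_m$ for some $n_0\ge 0$, so $I_l\subseteq\bigcup_{n_0\ge 0}f^{-n_0}\bigl(\bigcap_m P_m\bigr)$ has Lebesgue measure zero. The main technical obstacle is proving the summable Koebe bound uniformly in $\ell$: combining the pole-derivative factor with the sheet-dependent tract derivative demands the detailed asymptotic expansions from Section~\ref{prelim}, and the Koebe step requires verifying that each inverse branch of $\hat G$ extends to a definite disk inside $V_{k,0}$ avoiding the remaining singular values $\lambda_m$ with $m\notin S_l$, a condition secured by our choice of $R$ and the $V_{i,j}$. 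The pole-residue factor is the essential new input compared with the all-compact-repeller setting of \cite{KK}.
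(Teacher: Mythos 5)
Your plan replaces the paper's density-point argument with an induced-map, measure-contraction scheme: rather than taking a Lebesgue density point of the escaping set and reaching a contradiction through Koebe distortion (as the paper does, following Lyubich), you try to show directly that the set of forever-returning points is a Cantor-type set whose area is annihilated by a uniform geometric contraction factor. These are genuinely different strategies, and yours is in principle viable; but as written it has a substantive gap.

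The decomposition of $V_{j,0}$ into the sheets $D_{j,k}^{(\ell)}$ ``up to sets of measure zero'' is not correct, and this is not a boundary technicality. You obtain $D_{j,k}^{(\ell)}$ by pulling $V_{k,0}$ back through the universal cover $f|_{T_k}:T_k\to \CC\setminus\{\lambda_k\}$. But $f^{-1}(V_{k,0})$ also contains components that are \emph{not} in any asymptotic tract, namely punctured neighborhoods of the finite preimages $b_{km}\in f^{-1}(\lambda_k)$, which form an infinite sequence accumulating at infinity along the critical rays. For $z\in I_l$, the orbit can and does pass near $\infty$ and then land near some $b_{km}$ (rather than inside $T_k$), and only \emph{then} near $\lambda_k$. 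This is exactly the situation the paper isolates: it introduces the punctured disks $V_{ij}$ around the $b_{ij}$, proves a separate expansion estimate there (Lemma~\ref{lem3b}, driven by the residue asymptotics $|r_j|\sim |s_j|^{-(N-2)/2}$ and the pole derivative bound $|f'|\geq R/2$), and handles this as Part~2 of the proof of Theorem~\ref{main1}. Your argument simply does not see this route. Moreover, it is not an innocuous omission for your summation scheme: the expansion near the $b_{km}$ grows only like $R|m|^{(N-2)/N}$, so the preimage areas decay like $|m|^{-2(N-2)/N}$, a series that fails to converge absolutely for $N\leq 4$. The local density-point approach of the paper sidesteps this entirely — it needs only a uniform gap $\tau'''>0$ coming from the tracts $T_i$, $i>K$, being excluded from the return region, combined with Koebe distortion at a single density point; it never needs a global summable bound over all preimages. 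This is precisely why the paper's route is more robust.

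A secondary, smaller issue: the exponent in your bound $|D_{j,k}^{(\ell)}|\lesssim |V_{k,0}|/(R_0+|\ell|)^{1+4/N}$ does not match what the combined pole factor $|w|^2/|r|$ and tract factor $|w|^{(N-2)/2}$ actually give (the area-contraction exponent should be closer to $2+4/N$), and for small $|\ell|$ the restriction $|w|>R$ means $\hat G$ does \emph{not} map $D_{j,k}^{(\ell)}$ \emph{onto} $V_{k,0}$, only onto a thin sub-disk, so the clean Koebe area comparison needs adjusting there. These would wash out in the end, but you should also verify them if you pursue this route. The essential thing to fix, though, is the missing $b_{km}$ channel; without it, and without an argument that circumvents the divergent series for small $N$, the proof does not close.
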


The proof of this theorem depends on the next two lemmas.

\medskip
Fix $R>>0$, and let $\cala_R=\{z\in \CC, |z|>R\}$; %for $t>R$ let $\cala_{R,t}=\{z\in \CC,R< |z| <t\}$.
For each $1\leq i\leq K$,  and $ j \in \ZZ$, let $ b_{ij}=f^{-1}(\la_i)$.

Because $\lambda_i$ is prepole of order $p_i$,   one component of $f^{-p_i}(\cala_R)$ is the  topological disk  $D_i$ punctured at $\lambda_i$.   Therefore the set of components of $f^{-1}(D_i)$ consists of the asymptotic tract $T_i$ of $\lambda_i$ and the topological disks $V_{ij}$ punctured at $b_{ij}$.
For each $S_l\in \Sigma$ and $z \in \cup_{i\in S_l}(\cup_{j} V_{ij}\cup T_i)$, define the map $\sigma_l(z)=f^{p_i+1}(z)$.

%\begin{figure}
%  \centering
%  % Requires \usepackage{graphicx}
%\includegraphics[width=6in]{Figure2b.pdf}
%\vspace*{-0.5in}
%  \caption{Preimages of $\la_i$}~\label{fig2}
%\end{figure}
%

\begin{lemma}\label{lem3a} If $z\in \cup_{i=1}^K T_i$, then $$|\sigma_l'(z)|>\frac{|\log |\sigma_l(z)|-\log R|}{4\pi}\cdot \frac{|\sigma_l(z)|}{|z|}$$
\end{lemma}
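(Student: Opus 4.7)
The plan is to recognize $\sigma_l\colon T_i\to\cala_R$ as a universal covering map, lift it through the exponential to a conformal isomorphism with a half-plane, and then combine the Koebe $1/4$ theorem with an elementary geometric estimate on $d(z,\partial T_i)$.

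First I would verify that $\sigma_l$ is a universal covering. Since $\lambda_i$ is a prepole of order $p_i$ whose forward orbit passes only through regular points, $f^{p_i}$ extends from $D_i\cup\{\lambda_i\}$ to a biholomorphism onto $\cala_R\cup\{\infty\}$. Composed with the universal cover $f\colon T_i\to D_i$ (which follows from the definition of a logarithmic asymptotic tract), this gives a universal covering $\sigma_l\colon T_i\to\cala_R$. The universal cover of $\cala_R$ is the half-plane $H=\{w:\mathrm{Re}(w)>\log R\}$ via $w\mapsto e^w$, so I would next lift $\sigma_l$ to a conformal isomorphism $\phi\colon T_i\to H$ satisfying $\sigma_l=e^{\phi}$. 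Differentiating gives $|\sigma_l'(z)|=|\sigma_l(z)|\,|\phi'(z)|$ while $\mathrm{Re}(\phi(z))=\log|\sigma_l(z)|$, so the desired inequality reduces to
$$|\phi'(z)|\;>\;\frac{\log|\sigma_l(z)|-\log R}{4\pi|z|}.$$

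Second, I would apply the Koebe $1/4$ theorem to the univalent map $\phi^{-1}$ on the maximal round disk $D(\phi(z),s)\subset H$, where $s=\log|\sigma_l(z)|-\log R$ is the distance from $\phi(z)$ to $\partial H$. This yields $D(z,s/(4|\phi'(z)|))\subset\phi^{-1}(D(\phi(z),s))\subset T_i$, so $d(z,\partial T_i)\geq s/(4|\phi'(z)|)$, that is, $|\phi'(z)|\geq s/(4\,d(z,\partial T_i))$.

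Third, I would bound $d(z,\partial T_i)$ geometrically. For $R$ sufficiently large, the tract $T_i$ lies far from the origin, so $0\notin T_i$; since $T_i$ is simply connected it cannot contain the entire circle $\{|w|=|z|\}$. Hence the connected component of $T_i\cap\{|w|=|z|\}$ containing $z$ is an open arc of length at most $2\pi|z|$ whose endpoints lie on $\partial T_i$, and $z$ is within Euclidean distance $\pi|z|$ of one of these endpoints. Substituting $d(z,\partial T_i)\leq\pi|z|$ into the Koebe bound gives the claim. The principal conceptual step is framing the problem as a distortion estimate for a conformal isomorphism with a half-plane after lifting through the exponential; the main subtlety is the geometric observation that $T_i$ cannot be arbitrarily wide at radius $|z|$, which is exactly what supplies the factor $\pi$ beyond the $4$ inherent in Koebe's theorem.
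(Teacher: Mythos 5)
Your proof is correct, and it takes a genuinely different route from the paper's. Both arguments start the same way---recognizing $\sigma_l\colon T_i\to \cala_R$ as a universal covering and lifting through the exponential---but they diverge in where Koebe is applied and in where the factor of $\pi$ comes from. The paper lifts \emph{both} the source and the target, working entirely in the log-plane: it passes to a conformal isomorphism $F_{im}\colon U_{im}\to H_R$ between a component $U_{im}$ of $\log T_i$ and the half-plane $H_R=\log\cala_R$, applies Koebe $1/4$ to $F_{im}^{-1}$, and bounds the radius of the resulting inscribed disk in $U_{im}$ by $\pi$ using the fact that each $U_{im}$ lies in a horizontal strip of height less than $2\pi$; the chain rule $\sigma_l=\exp\circ F_{im}\circ\log$ then converts this to the stated estimate on $\sigma_l'$. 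You instead lift only the target, producing $\phi\colon T_i\to H$ with $\sigma_l=e^{\phi}$, apply Koebe $1/4$ to $\phi^{-1}$ so that the inscribed disk sits in $T_i$ itself, and supply the missing bound $d(z,\partial T_i)<\pi|z|$ by the elementary observation that a simply connected tract omitting the origin cannot contain the whole circle $\{|w|=|z|\}$. The underlying geometric fact is the same in both---the paper's strip-width bound is exactly the log-plane shadow of your circle argument---but your version stays in the $z$-plane and avoids introducing the auxiliary components $U_{im}$, at the small cost of explicitly noting that $0\notin T_i$ for $R$ large (true since the tract is a neighborhood of infinity contained in a sector about a critical ray). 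One small point worth making explicit: to obtain the strict inequality ``$>$'' in the lemma, you should observe that the component of $T_i\cap\{|w|=|z|\}$ containing $z$ is a \emph{proper} open sub-arc, so its length is strictly less than $2\pi|z|$ and hence $d(z,\partial T_i)<\pi|z|$; combined with the Koebe lower bound this gives the strict inequality.
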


\begin{proof} Since $f^{p_i}: D_i\to A_R$ is conformal and $f: T_i\to D_i$ is a universal covering, it follows that  $\sigma_l: T_i\to A_R$ is also a  universal covering.  The rest of proof given below follows along the lines of the corresponding proof in \cite{Lyu}.

Consider $ H_R=\log \cala_R$, the right half plane with real part greater than $R$ and
 let $\calu_l = \log (\cup_{i \in S_l}T_i)$.  Then   $\calu_l \subset H_R$ and consists of infinitely many disjoint simply connected components $U_{im}$, $i \in S_l, m \in \ZZ$; moreover,  there is an $\epsilon_{im}>0$, depending on $R$ such that each  $U_{im}$ is fully contained inside a strip of height $2\pi -\epsilon_{im}$.  Because there are at most $K$ sets $U_{im}$,  the sum of their heights   is less than  $(2\pi K/N-\epsilon_R)$ where $\epsilon_R=\sum \epsilon_{im}$ depends on $R$ and $S_l$.

For each $U_{im}$ there is a conformal map $F_{im}$ to $H_R$ such that the following diagram commutes:

   \medskip
\medskip
\centerline{
\xymatrix{
U_{im} \ar[d]^{\exp} \ar[r]^{F_{im}} &H_R\ar[d]^{\exp}\\
T_i  \ar[r]^{\sigma_l} &\cala_R}.}

 For each point $z_0\in \cup T_i$, denote the lifts of $z_0$ and $\sigma_l(z_0)$ by $w_0\in U_{im}$ and $w_1\in H_R$ respectively. Note that $\Re w_1=\log |\sigma_l(z)|$.   Consider $D=D(w_1, \Re w_1-\log R)$ and its preimage under $F_{im}$. By the $1/4$ Koebe theorem,  its preimage contains a disk of radius
  $$\frac{\Re w_1-\log R}{4|F'_{im}(w)|}. $$
  As the width of each strip is less than $2\pi$, $$|F'_{im}(w)|\geq \frac{\Re w_1-\log R}{4\pi}.$$ The lemma now follows from the chain rule.

 \end{proof}

Theorem~\ref{lem1} follows directly since $ I \subset \cup_{S_l \in \Sigma} I_l$.

The next lemma  is the analog of   lemma~\ref{lem3a} in the case that  $S_l\in \Sigma$ and $z \in \cup_{i\in S_l}(\cup_{j} V_{i{j}})$.
Fix $i\in S_l$ and, suppressing the index $i$ for readability,  denote  the zeros of  $f(z) -\la_i=0$ by $b_j$.  Note that by proposition~\ref{derivatzeros},  $|f'(b_j)|\sim c|j|^{(N-2)/N}$.

\begin{lemma}\label{lem3b}

There exists a neighborhood  $V_j'$ and of $b_j$ and a constant $b>0$ such that $ V'_j\subset \overline{V'_j}\subset V_j$, $$V'_j\subset D(b_j,\frac{b}{|j|^{\frac{N-2}{N}}R}),$$ and for $z \in U_j$ and  for some constant $ B>0$,
$$|\sigma_l'(z)|>BR|j|^{\frac{N-2}{N}}$$
\end{lemma}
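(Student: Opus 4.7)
The plan is to exploit the fact that, since $f$ has no critical points, $\sigma_l=f^{p_i+1}$ restricts to a conformal bijection from $V_j$ onto $\cala_R$ carrying $b_j$ to $\infty$. Passing to the reciprocal $g=1/\sigma_l$ will turn this into a univalent map of a round disk, at which point the Koebe distortion theorem applied to $g\inv$ delivers both the size of $V_j'$ and the derivative bound in one stroke.

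First I would verify that $\sigma_l|_{V_j}:V_j\to\cala_R$ is conformal. By hypothesis $f^{p_i}:D_i\to\cala_R$ is conformal with $f^{p_i}(\la_i)=\infty$, hence $f^{p_i}$ has a simple pole at $\la_i$ with some nonzero residue $\rho_i$. Since $V_j$ is the component of $f^{-1}(D_i)$ containing $b_j$ and $f$ has no critical points, $f:V_j\to D_i$ is a conformal bijection with $f(b_j)=\la_i$. Composing, $\sigma_l$ is a conformal bijection of $V_j$ onto $\cala_R$ with $\sigma_l(b_j)=\infty$ and local expansion
$$\sigma_l(z)=\frac{\rho_i}{f'(b_j)(z-b_j)}+O(1),$$
so $g=1/\sigma_l$ extends to a conformal map $g:V_j\to D(0,1/R)$ with $g(b_j)=0$ and $g'(b_j)=f'(b_j)/\rho_i$. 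Proposition~\ref{derivatzeros} then yields $|g'(b_j)|\sim c_5|j|^{(N-2)/N}$ for a constant $c_5>0$.

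Next I would set $V_j'=g\inv(D(0,1/(2R)))=\{z\in V_j:|\sigma_l(z)|>2R\}$, so that $\overline{V_j'}\subset V_j$. Applying the Koebe distortion theorem to $\phi=g\inv:D(0,1/R)\to V_j$ with $\eta=1/2$ and $r=1/R$ produces two estimates simultaneously. The displacement bound
$$|\phi(w)-b_j|\leq|\phi'(0)|\,\frac{(1/2)(1/R)}{(1/2)^2}=\frac{2}{R|g'(b_j)|}\leq\frac{b}{R|j|^{(N-2)/N}}$$
for $w\in D(0,1/(2R))$ gives the inclusion $V_j'\subset D(b_j,b/(R|j|^{(N-2)/N}))$, while the distortion estimate $|\phi'(w)|\leq T(1/2)|\phi'(0)|=81|\phi'(0)|$ translates into $|g'(z)|\geq|g'(b_j)|/81$ for $z\in V_j'$. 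Combined with $|g(z)|<1/(2R)$ on $V_j'$, this gives
$$|\sigma_l'(z)|=\frac{|g'(z)|}{|g(z)|^2}>\frac{4R^2|g'(b_j)|}{81}>BR|j|^{(N-2)/N}$$
for $R$ sufficiently large, which is the desired lower bound.

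The main obstacle is the first step: identifying $\sigma_l$ as having a simple pole at $b_j$ whose residue is expressible through $f'(b_j)$. This requires combining the fact that $f^{p_i}$ has a simple pole at $\la_i$ (forced by $f^{p_i}$ being a conformal bijection onto $\cala_R$) with the nonvanishing of $f'(b_j)$ (which comes from $f$ having no critical points, so $f$ acts as a conformal bijection on each $V_j$). Once the relation $|g'(b_j)|=|f'(b_j)|/|\rho_i|$ is in hand, the asymptotic $|f'(b_j)|\sim c|j|^{(N-2)/N}$ from Proposition~\ref{derivatzeros} feeds directly into the Koebe bounds and the remainder is routine.
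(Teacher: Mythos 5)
Your proof is correct, but it packages the computation differently from the paper. The paper factors $\sigma_l=f\circ f^{p_i}$ and handles the two pieces with separate Koebe applications: it first writes $f(z)=\frac{r_i}{z-s_i}(1+\phi_i(z))$ near the pole $s_i=f^{p_i-1}(\lambda_i)$, applies Koebe to $h(z)=f^{-1}(1/z)$ to pin down the size of $U=f^{-1}(\cala_{2R})$ and extracts the direct estimate $|f'|\geq R/2$ on $U$; it then applies a second Koebe distortion argument to the branch of $f^{-p_i}$ carrying a neighborhood of $s_i$ to one of $b_j$, anchored by Proposition~\ref{derivatzeros}, and multiplies the two derivative bounds. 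You instead treat $\sigma_l$ as a single conformal bijection $V_j\to\cala_R$ with a simple pole at $b_j$, pass to $g=1/\sigma_l$, compute $g'(b_j)=f'(b_j)/\rho_i$ from the Laurent expansion of $\sigma_l$, and obtain both the containment $V_j'\subset D(b_j,b/(R|j|^{(N-2)/N}))$ and the lower bound on $|\sigma_l'|$ from one Koebe application to $g^{-1}$. Your route is shorter (one distortion estimate rather than two) and arguably cleaner since the only residue that enters is that of $\sigma_l$ itself; the paper's route gives more granular intermediate control (the explicit $|f'|\geq R/2$ near the pole, the annular inclusion for $U$), which is redundant for this lemma but consonant with how the rest of that section is organized. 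Your final inequality $|\sigma_l'(z)|>\frac{4R^2}{81}|g'(b_j)|$ is in fact slightly stronger than what the lemma asks for, and the reduction to the stated form $BR|j|^{(N-2)/N}$ works since $R$ is fixed and large.

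Two minor points to be careful about, neither a gap: (1) the proposition gives $|f'(b_j)|\sim c|j|^{(N-2)/N}$, which is an asymptotic; to extract a uniform two-sided bound over all $j$ one adjusts the constant to absorb the finitely many small indices, as the paper also does implicitly; (2) the claim that $f|_{V_j}:V_j\to D_i$ is a bijection (rather than a higher-degree cover) rests on $V_j$ being a punctured disk, not an asymptotic tract, together with the absence of critical points, which you correctly invoke.
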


\begin{proof}

For each $\lambda_i\in S_l$, denote the pole $f^{p_i-1}(\lambda_i)$ by $s_i$. Then expanding $f$ at $s_i$,
 $$f(z)=\frac{r_i}{z-s_i}(1+\phi_i(z))$$
 where $r_i$ is the residue of $f$ at $s_i$, and $\phi_i$ is analytic at $s_i$. Consider the annular region $\mathcal{A}_{2R}\subset \mathcal{A}_R$, and denote by $g$ the  branch of $f^{-1}$ such that   $g(\mathcal{A}_R)$ is a punctured neighborhood of $s_i$.  Set $h=g(1/z): D(0, 1/R)\to \mathbb{C}$, so that $0$ is a removable singularity and  let $U=h(D(0, 1/2R))=g(\mathcal{A}_{2R}) \cup \{\infty \}$. Then $h$ is conformal  and $h'(0)=r_i$.  The Koebe distortion theorem applied to $h$  proves that  for any $z\in D(0,1/2R)$,
 $$|h(z)-h(0)|\leq |h'(0)|\frac{\frac{1}{2} \cdot \frac{1}{R}}{(1-\frac{1}{2})^2}=\frac{2r_i}{R}.$$
 The Koebe $1/4$ theorem applied to $h$ on $D(0, 1/2R)$ proves that $D(s_i, r_i/8R)\subset U$.  Combining these gives
 $$D(s_i,\frac{|r_i|}{8R})\subset U\subset D(s_i,\frac{2|r_i|}{R}).$$
 Therefore for any $z\in U$,
 \begin{equation}\label{deriatinfinity}|f'(z)|=|-\frac{r_i(1+\phi_i(z))}{(z-s_i)^2}+\frac{r_i\phi'_i(z)}{z-s_i}|\geq |\frac{r_i}{z-s_i}|\geq \frac{R}{2}.
 \end{equation}

 Since $f$ has no critical points, the disk $D(s_i, 4|r_i|/R)$ at the pole $s_i$ is mapped univalently  by  the respective branches of $f^{-p_i}$ onto  neighborhoods of the points $b_j =f^{-p_i}(s_i)$. Let $V_j'$ be the component of $f^{-p_j}(U)=f^{-(p_i+1)}(\mathcal{A}_{2R})$ at $s_j$. It is obvious that $\overline{V_j'}\subset V_j$, a component of $f^{-(p_i+1)}(\mathcal{A}_R)$.
 Since $$U_i\subset D(s_i, \frac{2|r_i|}{R})\subset D(s_i, \frac{4|r_i|}{R}), $$  the Koebe distortion theorem implies that for any $z, w\in \widetilde{V}_{ij}$,
 \begin{equation}\label{measW}
 \frac{(f^{p_i})'(z)}{(f^{p_i})'(w)}\leq T(\frac{1}{2}).
 \end{equation}

 By Proposition $7$,  for some $c_1>0$, $|f'(b_{j})|\sim c_1j^{(N-2)/N}$.  Since  $f$ is univalent on the orbit of $\lambda_i$,   there exists  $c_2>0$ such that $|(f^{p_i})'(b_j)|\sim c_2 j^{(N-2)/N}$ and thus
 $$|(f^{p_i})'(z)|> c_2T^{-1}(\frac{1}{2}) j^{\frac{N-2}{N}}, \text{ for any }z\in V'_{j}.$$
   Since $U\subset D(s_i, 2|r_i|/R)$, this implies
 $$V'_{j}\subset D(b_{{j}}, \frac{2T(\frac{1}{2})|r_i|}{c_2|j|^{\frac{N-2}{N}}R}).$$
 This, combined  with equation (\ref{deriatinfinity}) also implies that for all $z\in V'_{j}$,
  $$\sigma_l'(z)\geq \frac{c_2R|j|^{\frac{N-2}{N}}}{2T(1/2)} $$
  and thus completes the proof.
   \end{proof}

\subsection{Proof of Theorem~\ref{main1}}

Let $E=\{z, \sigma^n_l(z)\to \infty, n\to \infty\}$; then
$$
I_l(f)=\cup_{n=0}^\infty f^{-n}(E).
$$
To prove theorem~\ref{main1}, it suffices to show that the measure of the set $E$ is zero.  We assume not and obtain a contradiction.  Let $z_0$ be a Lebesgue density point of the set $E$, and let $z_n=\sigma_l^n(z_0)$. As $z_n\to \infty$, without loss of generality, we may assume that for each $n$, $|z_{n+1}|\geq |z_{n}|\geq R$.   Set $\cala_{r,s} = \{ z, \, | \, r < |z| <s  \} $.

 Since by hypothesis, $K<N$, if follows that  $\cup_{K+1}^N T_i\not= \emptyset$ but $(\cup_{K+1}^N T_i)\cap \sigma_l^{-1}(\cala_R)=\emptyset$.  Therefore, for any $s>r>R$,  there is a
 $\tau>0$ such that
 $$\frac{m(\cala_{r,s}\cap \sigma_l^{-1}(\cala_{r,s}))}{m(A_{r,s})}<1-\tau. $$

Note that if $K=N$, the asymptotic tracts fill up $\sigma_l^{-1}(\cala_R)$.   The proof of non-ergodicity for $K=N=2$ in~\cite{CJK} uses this fact and lends  support to conjecture~\ref{nec}.

The proof of theorem~\ref{main1}, is in two parts depending on the orbit of $z_0$.\\
{\bf Part 1:} Assume that for all $n$, $z_n\in \cup_{i=1}^K T_i$.  This part of the proof depends on  lemma~\ref{lem3a} and   uses  the notation in that lemma.

As in the lemma, for $z_n \in T_i$, set $w_n=\log z_n \in U_{im}$ and    $r_n=\Re w_n$.  Then $F_{im}^{-1}: H_R \to U_{im}$  is the inverse branch
  such that $F_{im}^{-1}(w_n)=w_{n-1}$. The function $F_{im}^{-1}$
    is univalent in the disk $D(w_n, r_n-\log  R)$.  By   lemma~\ref{lem3a},  it follows that  $$|(F_{im}^{-1})'(w_n)|\leq \frac{4\pi}{r_n-\log R}.$$

Note that  it may be that $z_{n-1} \in T_j$, $i \neq j$, $i,j \in S_l$, and similarly,  $w_{n-1}$ may be in a different $U_{jm}$.   For the sake of readability, we will ignore these indices and write $U$ for whichever $U_{im}$ is meant and write $F^{-1}$ for whichever inverse branch is meant.

Next, consider the disk $D(w_n, r_n/4)$. First note that since  $U $ does not intersect the any of the  preimages, $  f^{-1}(T_i)$, $i=K+1, \ldots,N $, there exists a $\tau'>0$, such that
$$\frac{m(D(w_n,\frac{r_n}{4})\cap U)}{m(D(w_n,\frac{r_n}{4})}<1-\tau'.$$
Moreover,  for $w\in D(w_n,r_n/4)$,  the  Koebe distortion theorem implies that
$$|F^{-1}(w)-F^{-1}(w_n)|\leq \frac{4\pi}{r_n-\log R}\cdot \frac{\eta(r_n-\log R)}{(1-\eta)^2},$$where $$\eta=\frac{r_n}{4(r_n-\log R))}<\frac{1}{2}.$$
Therefore, $$F^{-1}(D(w_n, \frac{r_n}{4}))\subset D(w_{n-1}, d) \text{ where } d=8\pi.$$

For each $1\leq k\leq n-1$,  $F^{-1}$ is univalent in the disk $D(w_k, 2d)$ and $(F^{-1})'(w_k)\leq 1/8$. For each $k$,  $1\leq k\leq n-1$,  the Koebe $1/4$ theorem implies that
  $$F^{-1}(D(w_k, d))\subset D(w_{k-1},\frac{d}{2}). $$
Next, iterate $F^{-1}$ and set
 $B_n=F^{-n}(D(w_n, r_n/4))\subset D(w_0, 2^{-m+1}d)$.
 Now since the iterated function   $F^{-n}$ is univalent  on $D(w_n, \Re w_n-\log R)$,   apply Koebe distortion again to get
$$D(w_0, t \rho_n)\subset B_n\subset D(w_0, \rho_n)$$
where $t$ is independent of $n$, and $\rho_n$ is the radius of the smallest disk
centered at $w_0$ containing $B_n$. It follows that $\rho_n\leq 2^{-n+1}d$ which, in turn, implies that $\rho_n\to 0$ as $n\to \infty$.

Part (ii) of the Koebe Distortion Theorem applied to  $F^{-n}$ implies there exists a $\tau''$ such that
  $$\frac{m(B_n\cap E)}{m(E)}\leq 1-T^{-2}(\frac{1}{2})\tau''$$ for all $n$.  In other words, the Lebesgue density of the point $w_0$ is less than $1$,  which contradicts the assumption that $w_0$ is a density point.

\medskip

\noindent{\bf Part 2:} Now consider a subsequence  $z_{n_k} \in \cup_{i\in S_l}( \cup_j V_{j}) $.  % for some $i$ and $j$.  As above, we suppress the dependence on $i$.
 Let $W=\sigma_l^{-1}(\mathcal{A}_{2R})$ and let  $V_{j}$ be the component of $W$ such that $z_{n_k}\in V'_{j}\subset V_{j}$ for some $n_k$. There is at least one asymptotic tract $T_k$ such that $W \cap T_k =\emptyset$; thus there exists a  $\tau'''>0$ such that
  $$\frac{m(\mathcal{A}_{2R}\cap W)}{m(\mathcal{A}_{2R})}<1-\tau'''.$$
By  lemma \ref{lem3b},
  $$V'_{j}\subset D(b_{j}, \frac{b}{R|j|^{\frac{N-2}{N}}}), $$
 and for any $z\in V'_{j}$,
  $$ \sigma_l'(z)\geq BR|j|^{\frac{N-2}{N}} \text{  and   }  \frac{(f^{p_i})'(z)}{(f^{p_i})'(w)}\leq T(\frac{1}{2}).$$

Therefore, $$\frac{m(V'_{j}\cap \sigma_l^{-1}(W))}{m(V'_{j})}<1-T(\frac{1}{2})^{-2}\tau'''.$$

Without loss of generality,  assume that $|z_{n+1}|\geq |z_n|>>R$ for all $n$.   Then the above inequality, together with lemmas \ref{lem3a} and \ref{lem3b}, show
 $|\sigma_l'(z_n)|>M>1$ for all $n$.

%Next, if  $\widetilde{V}_{_{i}j}$ is the component of $\sigma^{-1}(\mathcal{A}_{2R})$ containing $z_{n_j-1}$, by equation ~(\ref{measW}),
 %$$\frac{m(\widetilde{V}_{i_{j}}\cap \sigma_l^{-2}(W))}{m(\widetilde{V}_{i_{j}})}<1-T^{-2}(1/2)\tau'''. $$
 Let $B_{n_k}=\sigma_l^{-n_k}(V'_{j})$. Then
$$B_{n_k}\subset D(z_0, M^{-n_k}\frac{b}{R|j|^{\frac{N-2}{N}}}).$$
Since $\sigma_l^{-n_k}$ is univalent on $V_{j}\supset V'_{j}$, this implies
$$D(z_0, t \rho_{n_k})\subset B_{n_j-1}\subset D(z_0, \rho_k)$$
where $t$ is independent of $n_j$, and $\rho_{n_k}$ is the radius of the smallest disk
centered at $z_0$ containing $B_{n_j-1}$. It follows that $\rho_{n_k}\to 0$ as $n_k\to \infty$.

Applying part (ii) of the Koebe Distortion Theorem,
   $$\frac{m(B_{n_k}\cap E)}{m(E)}\leq 1-T^{-4}(\frac{1}{2})\tau'''$$ for all $n_k$ which implies that the Lebesgue density of the point $z_0$ is less than $1$.  This  contradicts the assumption that $z_0$ is a density point and completes the proof of theorem~\ref{main1}.

  % \section{The Main Theorem}
  %In this section we use lemmas~\ref{lem1}, \ref{lem2} and theorem~\ref{main1} to prove the second main result
\subsection{Proof of Theorem~\ref{lem2}}

\begin{proof} To prove that $m(L)=0$, note first that
by assumption $\Omega=\cup_{i=K+1}^N \omega(\la_i)$ is a finite union of compact repellers,  so is again a compact repeller.  This implies that the orbits of the non-prepole asymptotic values do not accumulate on  $\Omega$, but actually land on it.  The proof in \cite{KK} assumes $K=0$.  Although this proof is similar to that proof, here we modify it  to take account  of the prepole asymptotic values.

% To make this precise and we need to introduce some more notation.
%  Denote the union of the orbits of the non-prepole asymptotic values by
%$$
%\widetilde{Orb}=\cup_{i=K+1}^{N} \cup_{n=0}^{\infty}f^n(\la_i)
%$$
  Let $\calk_{\epsilon}=\{z \, | \, dist(z,\Omega)< \epsilon \}$.  We claim there is an $\epsilon>0$ and an integer $M>0$ such that if
    $y= \la_i$, $i=K+1,\ldots, N$,  $n>M$ and $f^n(y) \in \calk_{\epsilon/2}$, then $f^n(y)\in \Omega$.

 If $ \Omega$ is finite, the claim is obviously true, so assume it is not.     By the compactness assumption, there are no prepoles in $\Omega$ and there
there are constants $\kappa>1$ and $\epsilon >0$   such that  $|(f^n) '(w)| \geq \kappa$ for some $m$ and all $w \in \calk_{\epsilon}$,  and thus for all   $w \in \overline{\calk_{\epsilon/2}}$.
  By the forward invariance of $\Omega$ and this expansion property, $\overline{\calk_{\epsilon/2}} \subset f^n(\overline{\calk_{\epsilon/2})}$.  Let $g$ be the inverse branch of $f^n$ reversing this inclusion.  Then set
 $$A_0=\overline{\calk_{\frac{\epsilon}{2}}} - g(\overline{\calk_{\frac{\epsilon}{2}}}) \text{ and } A_{n+1}=g^n(A_0), n \to \infty.$$
 These disjoint annuli are nested and, since the inverse branches are univalent,  have the same moduli.   Therefore,  if  for some $n$,  $f^n( y) \in \overline\calk_{\epsilon} \setminus  \Omega$, by compactness, there are subsequences of its iterates that converge both to  points in $\overline{A_0}$ and to  points in $ \Omega$.  This is a contradiction because these sets are disjoint and the claim is proved.

 \medskip
 Choose $\epsilon$ as above and set $$\call=\cap_{n\geq 0} f^{-n}(\calk_{\frac{\epsilon}{2}}).$$
 A point $z \in \call \setminus \Omega$ if its full forward orbit belongs to $\calk_{\epsilon/2}$.   We will show $m(\call \setminus  \Omega)=0$.  Since $L \subset \cup_{n=0}^{\infty}f^{-n}(\call)$ and $ \Omega$ is countable, this will imply $m(L)=0$.

 Suppose  $m(\call \setminus  \Omega)>0$ and let $z_0$ be a density point of  $\call \setminus  \Omega$.  Since $\Omega$ is compact, a subsequence $z_k=f^{n_k}(z_0)$   converges to a point $y_0 \in \overline{\calk_{\epsilon/2}} \subset \calk_{\epsilon}$.  Denote the respective inverse branches by $g_k$.  Set $D_k=D(z_k, \epsilon/4)$; then $D_k \subset \calk_{\epsilon}$ and $g_k$ is univalent on $D_k$.   Applying Koebe distortion we obtain
 $$\frac{m(g_k(D_k) \cap \call)}{m(g_k(D_k))} \rightarrow 1 \, \, \text{ and } \frac{m(D_k \cap f^{n_k}(\call))}{m(g_k(D_k))} \rightarrow 1.$$
 Finally, let $U$ be an open set with compact closure contained in $\CC \setminus \overline{\calk}$.   Since the Julia set is the whole sphere, there is an integer $M$ such that $f^M(D_k) \supset \overline{U}$ so that $m(f^{M+n_k}( \call \cap U)) >0$.   For all $k \in \NN$, however, $f^k(\call) \subset \calk_{\epsilon/2}$ so that $f^{M+n_k}( \call \cap U) = \emptyset$. This contradiction shows $m(L)=0$.
 \end{proof}

   \subsection{Proof of the main theorem}

  \begin{thm}\label{main2}  If $f$ is a Nevanlinna function with $1 \leq K < N$ prepole\footnote{ If infinity is an asymptotic value, we consider it a ``prepole of order 0''.}  asymptotic values and  $N-K$  asymptotic values that  accumulate on a compact repeller, then $f$ acts ergodically on its Julia set.
  \end{thm}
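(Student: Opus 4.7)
The plan is to apply Bock's theorem (as quoted in the introduction): ergodicity of $f$ on its Julia set follows once the radial Julia set has positive Lebesgue measure, equivalently once the non-radial set
\[
N=\{z\in\hat{\CC}:\omega(z)\subset P(f)\}
\]
has measure zero. The three preceding theorems are designed to dispose of the possible forms of $\omega$-limit sets inside $P(f)$.

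First I would verify $J(f)=\hat{\CC}$. Since $f$ is a Nevanlinna function it has no critical values, and every asymptotic value lies in $J(f)$: for $i\le K$ this is by hypothesis (prepoles belong to $J(f)$), and for $i>K$ the orbit of $\lambda_i$ accumulates on the compact repeller $\omega(\lambda_i)\subset J(f)$, so $\lambda_i\in J(f)$ as well. Because all singular values lie in $J(f)$, $f$ admits no attracting or parabolic cycles, no Siegel disks or Herman rings, and no Baker domains (each of these forces either a singular value in a Fatou component or a singular orbit escaping to $\infty$ without reaching it); the no-wandering-domains theorem for meromorphic functions with finitely many singular values eliminates the remaining possibility. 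Thus $F(f)=\emptyset$.

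Next I would unpack the structure of the post-singular set,
\[
P(f)=\bigcup_{i=1}^{K}Orb(\lambda_i)\;\cup\;\bigcup_{i=K+1}^{N}\overline{\bigcup_{n\ge0}f^n(\lambda_i)},
\]
and classify the $\omega$-limit sets it can contain. If $\omega(z)\subset P(f)$, forward invariance of $\omega(z)$ (where $f$ is defined) implies: whenever $\omega(z)$ meets some $Orb(\lambda_i)$ with $i\le K$ it contains the entire finite orbit up to $\infty$, and whenever it meets a compact repeller $\omega(\lambda_j)$, $j>K$, topological transitivity of $f$ on that repeller forces $\omega(z)\supset\omega(\lambda_j)$. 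Hence for $z\in N$, one of four cases holds: (i) $f^n(z)\to\infty$, so $z\in I$; (ii) $\omega(z)$ is contained in a union of compact repellers, so $z\in L$ (the proof of Theorem~\ref{lem2} applies to each such sub-union, which is itself a compact repeller); (iii) $\omega(z)=Orb_l$ for some $l$, so $z\in I_l$; or (iv) $\omega(z)$ is a genuine mixture of the two types.

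The main obstacle is case (iv). My plan is to observe that the orbit of such a $z$ must enter an asymptotic tract $T_i$ or one of the preimages $V_{ij}$ with $i\le K$ along an infinite subsequence $n_k$. Along that subsequence, the derivative estimates of Lemmas~\ref{lem3a} and~\ref{lem3b} provide univalent inverse branches whose distortion is controlled by the Koebe theorems, and the shrinking-disk argument from the proof of Theorem~\ref{main1} produces a neighborhood of $z$ on which the density of $N$ is bounded strictly below $1$. Thus $z$ cannot be a Lebesgue density point of $N$, and case (iv) also contributes a measure-zero set. Combining (i)--(iv) gives $m(N)=0$; Bock's theorem then yields ergodicity of $f$ on $\hat{\CC}$. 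The same density-point analysis simultaneously yields the positive measure of the radial Julia set announced in the remark following the Main Theorem.
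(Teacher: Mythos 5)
Your overall strategy is a genuine alternative to the paper's. The paper proves ergodicity directly: it takes an invariant set $A$ of positive measure, picks a Lebesgue density point $z_0\in A$ that avoids the measure-zero sets $I$, $I_l$, $L$, extracts an accumulation point $y$ of $\{f^n(z_0)\}$ bounded away from $\Omega=\cup_{i>K}\omega(\lambda_i)$, and then uses hyperbolic contraction on $\CC\setminus\Omega$ together with Koebe distortion along the inverse branches $f^{-n_j}$ to pull $B(y,\epsilon/2)$ back to shrinking almost-round disks near $z_0$, concluding $A\supset B(y,\epsilon/2)$ up to measure zero and hence $A=\hat\CC$. You instead route through Bock's theorem, aiming to show the non-radial set $N=\{z:\omega(z)\subset P(f)\}$ has measure zero. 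Both routes are reasonable, and your explicit check that $J(f)=\hat\CC$ is a point the paper absorbs into its hypotheses and remarks rather than spelling out; your accounting of which sets $\omega(z)$ can equal is more careful than the paper's one-line assertion that $I\cup\bigcup_l I_l\cup L$ ``contain all points whose orbits accumulate on $\cup\omega(\lambda_i)$.''

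The real issue is your case (iv), the mixture case, which is precisely where the one-line assertion in the paper is also thin. Your argument there is not yet a proof. You assert that the orbit of such a $z$ enters $T_i$ or $V_{ij}$ along a subsequence and then claim ``the shrinking-disk argument from the proof of Theorem~\ref{main1} produces a neighborhood of $z$ on which the density of $N$ is bounded strictly below $1$.'' But the derivative and distortion estimates of Lemmas~\ref{lem3a} and~\ref{lem3b} are for the first-return map $\sigma_l=f^{p_i+1}$ through a prepole chain; they say nothing about what $f$ does between consecutive returns to $\cup T_i\cup\cup V_{ij}$. In case (iv) those intermediate iterates can spend arbitrarily long stretches near the compact repeller $\Omega$, so the inverse branch you need to pull a disk back to $z$ is $f^{-n_k}$, not $\sigma_l^{-k}$, and you have not supplied a uniform domain of univalence, a uniform expansion rate, or a uniform density defect for it. To repair this you should instead argue as the paper's Theorem~\ref{main2} proof does: in the mixture case $\omega(z_0)$ necessarily contains finite points of a prepole orbit, and these lie outside $\Omega$ and outside $\{\infty\}$, so there is an accumulation point $y$ of the orbit with $\mathrm{dist}(y,\Omega\cup\{\infty\})>0$; pick the subsequence $z_{n_j}\to y$, note $f^{-n_j}$ is univalent on $B(z_{n_j},\epsilon)$ because $f$ has no critical values and $y$ is bounded away from the postsingular set, and use hyperbolic contraction on $\CC\setminus\Omega$ plus Koebe to get the shrinking almost-round disks and the density contradiction directly. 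That argument bypasses $\sigma_l$ entirely and handles case (iv) (and the proper-subset variants of (ii) and (iii)) in one stroke, which is essentially what the paper does even if its decomposition sentence is stated too loosely.
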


\begin{proof}
  Let $A$ be an $f$-invariant subset of the Julia set with positive measure. We will show that $A=\widehat{\mathbb{C}}$ up to  a set of measure zero.
 Let $z_0$ be a Lebesgue density  point of $A$ and denote its orbit by $z_n=f^n(z)$, $ n=0, 1, \ldots .$  We proved above that the measures of each of   the sets $I$,  $I_l$  and $L$ is zero.
 Since these three sets together contain all points whose orbits accumulate on $\cup_{i=1}^N \omega(\lambda_i)$ we assume that $z_0$ is not among them.

 \medskip

 By the above, the density point $z_0$ of $A$ has an accumulation point $y \in \CC \setminus \cup_{i=1}^N \omega(\la_i)$.  Recall that $\Omega =  \cup_{i=K+1}^N \omega(\la_i)$. Hence there is an $\epsilon>0$ so that  $2\epsilon=dist(y,  \Omega)>0$.  Thus, there is a
 subsequence $\{n_{j}\}$  in $\NN$ such that $z_{n_j}\to y$
as $j\to \infty$ and $dist(z_{n_i}, \Omega) \geq \epsilon$.

Let $B_j=B(z_{n_j}, \epsilon)$ and $V_j=B(z_{n_j}, \epsilon/2)$.
Let $g_j$ be the  inverse branch  of $f^{-n_j}$ that sends $z_{n_j}$ to $z$; it  is a  univalent function on $B_{j}$.  Let $U_j=g_j(V_j)$.
All of the  inverse branches of $f$
 are contracting with respect to the hyperbolic metric on $\CC\setminus  \Omega$; this implies that   $g_j'\to 0$ on $V_j$ as $j$ goes to $\infty$, which in turn
  implies that the diameter of $U_j$ tends to $0$. Since  $g_j$ is univalent on $B_j$,  the Koebe distortion  theorem shows that
  that  $U_j$ is almost a disk. Since $z$ is a density point of $A$,
$$
\lim_{j\to \infty} \frac{m(A\cap U_j)}{m(U_j)}=1.
$$
Applying Koebe distortion again, since $A$ is an invariant subset, we get
$$
\lim_{j\to \infty} \frac{m(A\cap V_j)}{m(V_j)}=1.
$$

This and the fact that $V_j$ approaches $B_y=B(y, \epsilon/2)$ as $j$ goes to $\infty$ together  imply that $B_y\subset A$ up to a set  measure zero. Since $A$ is  an invariant subset of the Julia set $J$, $f^n(B_{y})\subset A \subset J$. Since $A$ is in the Julia set,  $B_{y}$ is also in the Julia set and $f^{n}(B_{0})$ approaches  $\CC$ as $n$ goes to $\infty$.  Therefore,  $A=\widehat{\mathbb{C}}$ up to a zero-measure set and the proof of Theorem~\ref{main2} is complete.
\end{proof}

\medskip
\begin{remark}  The assumption that the $\omega$-limit sets of the non pre-polar asymptotic values are compact repellers says the Julia set is the whole sphere and gives us the expansion we need to prove our theorem.   We could replace this by assuming that the Julia set is the sphere and that the orbits of the non-prepolar orbits are bounded.  Then the main theorem of~\cite{GKS} implies the expansion we need exists.
\end{remark}

\medskip
\begin{remark} Another application of the results in ~\cite{GKS} and~\cite[Theorem 1.1]{RVS} to the Nevanlinna functions $f$ of our main theorem is that $f$ supports  no invariant line field.
\end{remark}

\medskip
\begin{remark}  Finally, the results in~\cite{KU} applied to the Nevanlinna functions of our main theorem prove that $f$ has a $\sigma$-finite ergodic conservative $f$-invariant  measure  absolutely continuous  with respect to the Lebesgue measure.
 \end{remark}

%\section{Concluding remarks}

%\begin{remark} Skorulski, \cite{S1,S2},  proved that functions $f(z)$ and $f(z^p)$ such that $Sf$ is constant,  and such that both asymptotic values are prepoles,  are not ergodic on their Julia sets.   We conjecture that the same is true for all Nevanlinna functions, if all its asymptotic values are prepoles. We plan to explore this in a future work.
%\end{remark}
%
%\begin{remark} Nevanlinna functions have no critical points, and hence are nonrecurrent, and  they have no wandering domains. For the Nevanlinna functions in this paper, the finite part of the postsingular set is bounded.  Therefore, they satisfy the hypotheses of the  Theorem 1.1 of Rempe and Van Strien,  \cite{RVS},  and thus their Julia sets support no invariant line fields.
%\end{remark}
%
%\begin{remark}  Another interesting question about these Nevanlinna functions is whether or not there exists an invariant measure, absolutely continuous with respect to Lebesgue measure.   Kotus and \`Swiatek have shown, \cite{KS1}, that for $\la e^z$ with the orbit of $0$ bounded and the Julia set the whole Riemann sphere, although the function is ergodic, it admits no probabilistic invariant measure absolutely continuous with respect to  Lebesgue measure.  On the other hand, they have also show, \cite{KS2}, that under certain conditions, there are classes of meromorphic functions that do admit such a measure.   It would be interesting to apply these results to the family of functions in this paper.
%\end{remark}

\end{document}